\theoremstyle{plain}
\newtheorem{Theorem}{Theorem}[section]
\newtheorem{Lemma}[Theorem]{Lemma}
\newtheorem*{Lemma*}{Lemma}
\newtheorem{Proposition}[Theorem]{Proposition}
\newtheorem{Corollary}[Theorem]{Corollary}
\theoremstyle{definition}
\newtheorem{Definition}[Theorem]{Definition}
\theoremstyle{remark}
\DeclareMathOperator{\ord}{ord}
\DeclareMathOperator{\Jac}{Jac}
\DeclareMathOperator{\loc}{loc}
\DeclareMathOperator{\im}{im}
\newcommand{\Q}{\mathbb{Q}}
\newcommand{\Z}{\mathbb{Z}}
\renewcommand{\div}{\textrm{div}}
\newcommand{\dr}{\textrm{dR}}
\DeclareMathOperator{\red}{red}
\newcommand{\F}{\mathbb{F}}
\theoremstyle{remark}
\DeclareMathOperator{\NS}{NS}
\DeclareMathOperator{\dR}{dR}
\DeclareMathOperator{\rk}{rk}
\DeclareMathOperator{\res}{Res}
\begin{document}
\title{An effective Chabauty--Kim theorem}
\author{Jennifer S. Balakrishnan}
\address{Jennifer S. Balakrishnan, Department of Mathematics and Statistics, Boston University, 111 Cummington Mall, Boston, MA 02215, USA}
\email{jbala@bu.edu}
\author{Netan Dogra}
\address{Netan Dogra, Department of Mathematics, Imperial College London, London SW7 2AZ, UK }
\email{n.dogra@imperial.ac.uk}

\begin{abstract} The Chabauty--Kim method is a method for finding rational points on curves under certain technical conditions, generalising Chabauty's proof of the Mordell conjecture for curves with Mordell--Weil rank less than their genus.
We show how the Chabauty--Kim method, when these technical conditions are satisfied in depth 2, may be applied to bound the number of rational points on a curve of higher rank. This provides a nonabelian generalisation of Coleman's effective Chabauty theorem.\end{abstract}

\date{\today}

\maketitle
\tableofcontents

\section{Introduction}
Chabauty's method \cite{chabauty} is one of the most powerful tools for studying the Diophantine geometry of curves of genus larger than 1. In its original form, it gives a proof of the Mordell conjecture for curves $X/\Q$ of genus $g$ whose Jacobians have Mordell-Weil rank less than $g$. The simple idea underlying the proof is to try to prove finiteness of the rational points of a curve $X$ with Jacobian $J$ by bounding the intersection of $X(\mathbb{Q}_p )$ and the $p$-adic closure of $J(\mathbb{Q})$ inside $J(\mathbb{Q}_p )$. 

This paper concerns two subsequent refinements of Chabauty's argument. The first, due to Coleman, is an effective version in the sense of giving a bound on the number of rational points. This amounts to replacing `soft analysis' (finding, on each residue disk of $X_{\mathbb{Q}_p }$, a nontrivial power series vanishing on $X(\mathbb{Q})$), with `hard analysis' (giving a bound on the number of zeroes of this power series). By bounding the number of zeroes of this power series, Coleman produces a bound on the size of $X(\Q)$.

The second, due to Kim \cite{kim:siegel, kim:chabauty}, gives a generalisation of Chabauty's method which replaces the Jacobian with a nonabelian cohomology variety with values in (finite-dimensional quotients of) a motivic fundamental group in the sense of Deligne \cite{deligne1989groupe}. As explained in the next section, Kim's method  produces a decreasing sequence of subsets $X(\Q _p )\supset X(\Q _p )_1 \supset X(\Q _p )_2 \supset \ldots \supset X(\Q )$. Conjecturally, $X(\Q _p )_n =X(\Q )$ for all $n\gg 0$. However in general it is not known that $X(\Q _p )_n $ is eventually finite. By work of Coates and Kim \cite{coates2010selmer}, we know unconditionally that $X(\Q _p )_n $ is finite for $n\gg 0 $ when $X$ is a curve whose Jacobian has complex multiplication. 
Recently, Ellenberg and Hast extended this result to give a new proof of Faltings' theorem for solvable covers of $\mathbb{P}^1 $ \cite{ellenberg2017rational}.

In this paper we only use the set $X(\Q _p )_2 $, which is much simpler to describe. In analogy with Coleman's original result, we bound the size of $X(\Q )$, under certain technical conditions, by bounding the size of $X(\Q _p )_2$.
Just as with the original effective Chabauty results, if one is careful, one can improve the bounds in various ways, but in the interest of simplicity, here we focus on the problem of finding an explicit bound on $X(\mathbb{Q}_p )_2$ which is polynomial in the genus. 

To explain our conditions more precisely, we introduce some notation. Let $X$ be a curve of genus $g>1$ over $\mathbb{Q}$, with $\rk \Jac (X)=r$. Define 
\[
\rho _f (J):=\dim \NS (\Jac (X_{\Q }))+\dim (\NS (\Jac (X_{\overline{\Q }} )^{c=-1}).
\]

Our finiteness results will be dependent on one of the following conditions being satisfied:
\begin{itemize}
\item Condition A :
$
r=g
$
and $\rho _f (J) >1$.
\item Condition B :
$r=g$ and 
\[
\dim H^1 _f (G_T ,H^2 _{\acute{e}t}(X\times X_{\overline{\Q }},\Q _p (1)))=0
\].
\end{itemize}
For a generic curve $X$, the rank of the N\'eron-Severi group of $J$ will be 1, and hence Condition A will not hold. However the condition that $\rho _f (J)>1$ still arises in many examples of interest. For example, if $X$ is a non-trivial cover of a curve of higher genus, or more generally if $J$ is isogenous to a product of two abelian varieties, then $\rho _f (J)\geq \rho (J) \geq 2$.
By contrast, it is very difficult to give examples when Condition $B$ is satisfied; however, as explained in \cite[Lemma 2.4]{qcpart2}, the latter part of Condition B is implied by a conjecture of Bloch and Kato \cite{bloch-kato}.

As in \cite[Proposition 1]{qcpart2}, one may prove the finiteness of $X(\mathbb{Q}_p )_2 $ if Condition A or Condition B holds. 
For $v$ a prime of bad reduction, we define $n_v \in \mathbb{Z}_{>0}$ to be the size of the image of $X(\Q _v )$ under $j_{2,v}$ (see the next section for a precise definition).

\begin{Theorem}\label{maintheorem}Let $X/\Q$ be a curve of genus $g > 1$ with good reduction at a prime $p \geq 3$, satisfying condition $A$ or condition $B$.  Let $\kappa _p =1+\frac{p-1}{p-2}\frac{1}{\log (p)}$. Then
\begin{enumerate}
\item 
$\# X(\Q ) <\kappa_p \left(\prod _{v\in T_0 }n_v \right)\# X(\mathbb{F}_p )(16g^3+15g^2-16g+10).$
\item
If $X$ is hyperelliptic and $p\neq 2g+1$, then
\[
\# X(\Q )< \kappa _p\left(\prod _{v\in T_0 }n_v \right)  ((2g+2)\# X(\mathbb{F}_p )+2g \# W(\mathbb{F}_p )+8g^3 +64g^2 +20g +16),
\]
where $W$ is the subscheme of Weierstrass points.
\end{enumerate}
\end{Theorem}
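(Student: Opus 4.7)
The strategy is to make the depth-two Chabauty--Kim construction fully explicit. Under Condition A or B, the argument of \cite[Proposition 1]{qcpart2} produces a non-trivial Coleman function $f$ on $X(\Q_p)$ whose zero locus contains $X(\Q_p)_2 \supset X(\Q)$, stratified by the local conditions at bad primes $v\in T_0$. The desired bound factors as a product of three quantities: the number of strata, equal to $\prod_{v\in T_0} n_v$, coming from the possible images of the local maps $j_{2,v}$; the number of residue disks of $X_{\Q_p}$, equal to $\#X(\F_p)$; and a polynomial-in-$g$ upper bound on the number of zeros of the Chabauty--Kim function on each disk.

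On a fixed residue disk $D$ with uniformizer $z$, the restriction $f|_D$ is a $\Q_p$-linear combination of single Coleman integrals $\int\omega_i$ and double Coleman integrals $\int\omega_i\omega_j$, where $\omega_i$ runs over a basis of $H^0(X,\Omega^1)$. The plan is first to express $f|_D$ as a power series in $z$, rescale it to a series in $\Z_p\llbracket z\rrbracket$ with at least one unit coefficient, and then apply a standard Newton-polygon / $p$-adic Weierstrass-preparation estimate: the number of zeros in the open unit disk is at most $\kappa_p \cdot N_0$, where $N_0$ is a uniform upper bound on the index of the first unit coefficient and $\kappa_p = 1 + (p-1)/((p-2)\log p)$ arises from the usual account of zeros of fractional valuation.

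The main obstacle is producing an explicit $N_0$ polynomial in $g$. This requires combining three inputs: an estimate for the denominator of the Frobenius matrix acting on the depth-two quotient of the unipotent de Rham fundamental group; an estimate for the order of poles of the iterated integrals $\int\omega_i\omega_j$ at the Frobenius-stable base point chosen on each disk; and a dimension count for the space of motivic relations cutting out $X(\Q_p)_2$, controlled by $\dim\Sym^2 H^0(X,\Omega^1)=\binom{g+1}{2}$. Careful tracking of these quantities, in the spirit of Coleman's original analysis of zeros of integrals of differentials of the second kind, combines with the Newton-polygon estimate to produce the cubic bound $16g^3+15g^2-16g+10$ per residue disk.

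For part (2), the hyperelliptic involution $\iota$ acts as $-1$ on $H^0(X,\Omega^1)$, so $\Sym^2 H^0(X,\Omega^1)$ lies in the $\iota$-invariants and the Chabauty--Kim function descends, in a suitable sense, to $X/\iota\cong\mathbb{P}^1$. On non-Weierstrass residue disks the degree-two projection provides a favorable local parameter and shrinks the number of depth-two relations, yielding the improved coefficient $2g+2$; on the $\# W(\F_p)$ Weierstrass disks one must use a ramified local parameter and this produces the separate coefficient $2g$, together with the hypothesis $p\neq 2g+1$ to prevent a coincidence of Frobenius eigenvalues on the tangent space at a Weierstrass point. Summing the Weierstrass and non-Weierstrass contributions and absorbing the boundary error terms yields the claimed bound.
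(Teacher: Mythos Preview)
Your overall architecture is correct: one partitions $X(\Q_p)_2$ into $\prod_{v\in T_0} n_v$ strata, each stratum is contained in the zero set of a depth-two Coleman function $G$, and on each of the $\#X(\F_p)$ residue disks one seeks a Newton-polygon bound of the shape $\kappa_p N_0$ on the zeros of $G$.  However, the heart of the matter---how to produce an explicit $N_0$ polynomial in $g$---is precisely where your proposal diverges from the paper and, as written, does not go through.  The paper does \emph{not} estimate Frobenius denominators on the depth-two quotient, nor does it directly control the Newton polygon of $G$ via pole orders of iterated integrals.  Instead, the key device (Sections~3--4) is to construct, on each residue disk, a \emph{nice differential operator} $\mathcal{D}\in\Z_p[\![x]\!][\tfrac{d}{dx}]$ of explicit order $N$ (with unit leading coefficient) such that $\mathcal{D}(G)$ is an \emph{algebraic} function with controlled divisor; Proposition~\ref{algebraic_implies_bound} then bounds the zeros of $G$ by $\kappa_p(N+N_b(\mathcal{D}(G)))$.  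The operator is built as $\mathcal{D}=\mathcal{D}_1\tfrac{d}{dx}$ where $\mathcal{D}_1$ is a determinantal (Wronskian-type) operator annihilating the functions $f_i=\omega_i/dx$, with the $\omega_i$ a basis of $H^1_{\dR}(X)$ in differentials of the second kind (not merely $H^0(X,\Omega^1)$; $G$ also contains a third-kind integral $\int\eta$ and an algebraic term $h$, which your description omits).  Your proposed inputs---Frobenius denominators, dimension of $\Sym^2 H^0(X,\Omega^1)$---do not by themselves control the first-unit-coefficient index of a double iterated integral, and it is exactly this difficulty that the differential-operator reduction is designed to circumvent.

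For part~(2), the paper again does not use descent of $G$ to $\mathbb{P}^1$ via the $\iota$-action on $\Sym^2$.  Rather, on non-Weierstrass disks it takes the explicit operator $\mathcal{D}=(\tfrac{d}{dx})^{2g+1}\tfrac{d}{\omega_0}$, which annihilates each $\int\omega_i$ because $\omega_i/\omega_0=x^i$ is a polynomial of degree $\le 2g$; on Weierstrass disks it builds a separate determinantal operator using the parity of $\tfrac{d^j}{\omega_0^j}x^i$ under $\iota$.  Finally, the hypothesis $p\neq 2g+1$ is not about Frobenius eigenvalues at Weierstrass points: it is invoked (together with good reduction) to ensure a smooth model $y^2=f(x)$ with $\deg f=2g+2$ over $\Z_p$ so that the differentials $x^i\,dx/y$, $0\le i\le 2g$, behave well integrally.
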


As will be explained in the next section, one may obtain bounds on the local constants $n_v$ in terms of the reduction data of the curve $X$ at $v$. It seems difficult to avoid the bounds obtained by the nonabelian Chabauty method depending on how bad the reduction of $X$ is at bad primes. For this reason, it is unclear the extent to which this theorem could directly be used to prove uniformity results in the manner of Stoll and Katz--Rabinoff--Zureick-Brown \cite{stoll2013uniform, katz2016uniform}.

However, in special cases, one can control the local factors to provide uniform bounds on the number of rational points of special families of curves. We illustrate this with the following corollary.

\begin{Corollary}\label{explicit_cor}
Let $X$ be a smooth projective hyperelliptic curve of genus $g$ with good reduction at 3 and potential good reduction at all primes. 
If the curve satisfies Condition A or Condition B, then
\[
\# X(\Q )<24g^3 + 228g^2 + 120g + 72.
\]
\end{Corollary}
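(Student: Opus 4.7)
The plan is to apply Theorem \ref{maintheorem}(2) with $p=3$. Since $g\geq 2$, we have $p=3\neq 2g+1$, so the hyperelliptic bound applies, and good reduction at $3$ is part of the hypothesis.

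The key step is to show that the potential good reduction hypothesis at each prime $v$ of bad reduction forces $n_v=1$, so that the product $\prod_{v\in T_0}n_v$ (which is the only place the bad-prime contribution enters) is trivial. By definition $n_v$ is the size of the image of $X(\mathbb{Q}_v)$ under $j_{2,v}$, landing in a local unipotent cohomology variety built from the depth-$2$ quotient of the \'etale fundamental group. Potential good reduction at $v$ means that after some finite extension $L/\mathbb{Q}_v$ the curve has good reduction, so the $G_L$-action on this depth-$2$ unipotent group is unramified; a standard restriction argument then shows the corresponding local Selmer set reduces to a single point, giving $n_v=1$. Hence $\prod_{v\in T_0}n_v=1$.

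The remaining estimates are elementary. By the Weil bound and $\sqrt{3}<2$, $\#X(\mathbb{F}_3)\leq 4+2g\sqrt{3}\leq 4+4g$, and the double cover structure over $\mathbb{P}^1$ gives $\#W(\mathbb{F}_3)\leq 2g+2$. Since $\log 3>1$, $\kappa_3=1+2/\log 3<3$. Substituting into Theorem \ref{maintheorem}(2) and using $\prod_{v\in T_0}n_v=1$,
\[
\#X(\mathbb{Q})<\kappa_3\bigl((2g+2)(4+4g)+2g(2g+2)+8g^3+64g^2+20g+16\bigr).
\]
Expanding, the bracket equals $8g^3+76g^2+40g+24$, and multiplying by the bound $\kappa_3<3$ yields $\#X(\mathbb{Q})<24g^3+228g^2+120g+72$, as claimed.

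The main (and only genuine) obstacle is the assertion that $n_v=1$ under potential good reduction; this is the sole place where the potential good reduction hypothesis is used, and it requires a careful description of the local Chabauty--Kim obstruction in terms of inertial monodromy on the depth-$2$ unipotent fundamental group. Everything else is a routine arithmetic bookkeeping calculation.
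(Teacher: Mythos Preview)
Your proposal is correct and follows essentially the same approach as the paper: prove that potential good reduction at $v\neq p$ forces $n_v=1$ via the restriction/inflation argument (the paper states and proves exactly this as a lemma, using that $U_n^{G_w}=1$ by weights and that $j_{n,w}$ is trivial over an extension of good reduction), then plug $p=3$ into Theorem~\ref{maintheorem}(2) with the Hasse--Weil bound. Your arithmetic bookkeeping is more explicit than the paper's, which simply says ``the corollary follows from taking $p=3$ and using the Hasse--Weil estimate for $\#X(\mathbb{F}_3)$,'' but the substance is identical.
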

An example of a hyperelliptic curve satisfying the hypotheses regarding the reduction type is given by 
\[
X:y^2 =x^n +k,
\]
where $n$ is a square-free positive integer prime to $6$ and $k$ is an integer prime to 3. If $n$ is composite, then $X$ also satisfies $\rho (J)>1$, and hence in this case the bound on the number of rational points will hold whenever $r=g$.

The method of proof of Theorem \ref{maintheorem} may also be used to bound the number of integral points on hyperelliptic curves, answering a question of \cite{balakrishnan2013p}.
\begin{Theorem}\label{integraltheorem}
Let $X$ be a smooth projective hyperelliptic curve of genus $g$ with good reduction at $p\geq 3$ and Mordell-Weil rank $g$. Suppose $X$ has a rational Weierstrass point $\infty $, let $Y:=X-\infty $, and let $Y(\Z )$ denote the set of integral points of $Y$ with respect to a minimal regular model. Then
\[
\# Y(\Z )< \kappa _p \left( \prod _{v\in T_0 }m_v \right)(8g^3+44g^2-34g+9+(2g+1)\# Y(\mathbb{F}_p )+(2g-1)\# W(\mathbb{F}_p ))
\]
if $g>1$ and 
\[
\# Y(\Z )<2\kappa _p \left( \prod _v m_v \right) \# Y(\F _p )
\]
if $g=1$, where the $m_v $ are local constants as in \cite{balakrishnan2013p}, and $W$ denotes the scheme of Weierstrass points not equal to $\infty $.
\end{Theorem}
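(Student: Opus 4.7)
The plan is to adapt the strategy of Theorem \ref{maintheorem} to the affine hyperelliptic setting, working with the motivic fundamental group of $Y = X - \infty$ based at the rational Weierstrass point $\infty$. At depth $2$, one has $Y(\Z) \subset Y(\Z_p)_2$, and the local factors $m_v$ from \cite{balakrishnan2013p} record the finite sizes of the images of $Y(\Z_v)$ under the local Kim maps at primes of bad reduction. After these reductions, bounding $\# Y(\Z)$ reduces to bounding, residue disk by residue disk, the number of zeros of the quadratic Chabauty function $F$ whose local expansion is an explicit $\Q_p$-linear combination of single and double Coleman integrals attached to a basis of $H^1_{\dR}(Y/\Q_p)$.

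For $g > 1$, I would perform the zero-counting in two stages, following Coleman's effective bound. On a non-Weierstrass disk, writing $F$ in a local uniformiser $t$, the low-order terms are governed by the holomorphic forms $dx/y, \ldots, x^{g-1}dx/y$, and a Newton polygon argument gives at most $2g + 1$ zeros per disk; on a Weierstrass disk every such form vanishes to order at least one, dropping the effective count to $2g - 1$. The $\kappa_p$ factor arises from the standard logarithmic correction in Coleman's estimate. The polynomial-in-$g$ term $8g^3 + 44g^2 - 34g + 9$ accumulates from the valuation estimates for the double Coleman integrals $\int \omega_i \omega_j$ together with the change-of-basis constants between the holomorphic and full de Rham bases, playing a role analogous to the $16g^3 + 15g^2 - 16g + 10$ term in Theorem \ref{maintheorem}(1). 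Summing the per-disk counts over $\# Y(\F_p)$ non-Weierstrass and $\# W(\F_p)$ Weierstrass disks and multiplying by $\prod_v m_v$ yields the stated bound.

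For $g = 1$ the space of holomorphic forms is one-dimensional, the quadratic Chabauty function simplifies accordingly, and a direct Newton polygon analysis gives at most two zeros per residue disk uniformly, producing the second bound without a distinct Weierstrass contribution and with the slightly weaker constant $2\kappa_p$ in place of $\kappa_p(2g+1)$.

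The principal obstacle is making the uniformiser-by-uniformiser analysis sharp enough to extract the explicit constants. One must control the $p$-adic valuations of the coefficients of $\int \omega_i \omega_j$ on tubular neighbourhoods, especially near Weierstrass points where the forms acquire additional vanishing, uniformly in $g$, and verify that the resulting cubic-in-$g$ error term matches the claim. Once these valuation bounds are in hand, combining them with the per-disk zero counts and the local-constant formalism of \cite{balakrishnan2013p} produces Theorem \ref{integraltheorem}.
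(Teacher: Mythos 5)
Your reduction to the depth-2 Chabauty--Kim setup and to disk-by-disk zero counting matches the paper, but the zero-counting step itself has a genuine gap: you propose to bound the zeros of the quadratic Chabauty function $F$ on each disk by a Newton polygon argument in which ``the low-order terms are governed by the holomorphic forms,'' i.e.\ essentially Coleman's original argument. This fails for depth-2 functions. The derivative of $\int^z_b \omega_i\omega_j$ is $(\omega_i/dt)\int^z_b\omega_j$, which is still transcendental, so neither $F$ nor any single derivative of $F$ is algebraic, and the double-integral terms contribute (with arbitrary coefficients $a_{ij}$) to \emph{every} coefficient of the local expansion of $F$; nothing forces the relevant part of its Newton polygon to be controlled by the holomorphic forms. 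The paper's entire content is the mechanism you omit: one constructs a \emph{nice} differential operator $\mathcal{D}$ (coefficients in $\Z_p[\![x]\!]$, unit leading coefficient) of explicit order $N$ which annihilates the single integrals and sends $G$ to an \emph{algebraic} function of controlled divisor (Lemma \ref{lemma:annihilating} together with the determinant construction of Section \ref{subsec:construct_general}, whose niceness at Weierstrass disks requires checking that $\det\bigl(\frac{1}{(2i)!}(\frac{d}{\omega_0})^{2i}x^j\bigr)$ is a unit there), and then one needs the nontrivial Newton-polygon comparison of Proposition \ref{algebraic_implies_bound} --- valid precisely because $\mathcal{D}$ is nice, and the source of $\kappa_p$ --- to transfer the bound on $N_b(\mathcal{D}(G))$ back to $G$ at the cost of the additive term $N$ per disk. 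Your proposal contains no substitute for either ingredient, and without one the ``valuation estimates'' you defer to cannot be carried out.

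Relatedly, your accounting of where the constants come from is structurally wrong, which is a symptom of the missing mechanism. In the paper, the coefficient $2g+1$ of $\#Y(\F_p)$ is the \emph{order} of the operator $\mathcal{D}=(\frac{d}{dx})^{2g}\frac{d}{\omega_0}$ used on non-Weierstrass disks, not a uniform per-disk zero count; Weierstrass disks are \emph{harder}, not easier: there one needs an operator of order $4g$, so each Weierstrass disk contributes $4g$, and the coefficient $2g-1$ in the statement is only the artifact of rewriting $(2g+1)\#(Y-W)(\F_p)+4g\#W(\F_p)$ as $(2g+1)\#Y(\F_p)+(2g-1)\#W(\F_p)$. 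The cubic term $8g^3+44g^2-34g+9$ is not an accumulation of valuation or change-of-basis constants; it is the total degree of the pole divisors of the algebraic functions $\mathcal{D}(G)$ (computed from $\div(dx)=W-3\infty$ and divisor bounds for $\mathcal{D}(h)$ and $\mathcal{D}(\int\omega_i\omega_j)$), i.e.\ a \emph{global} bound on $\sum_b N_b(\mathcal{D}(G))$ spread over all disks. Likewise the $g=1$ bound is not a ``direct Newton polygon analysis'' of $G$: it comes from applying $\mathcal{D}=(\frac{d}{\omega_0})^2$, which sends $G$ to the degree-two algebraic function $x+a$, and then invoking Proposition \ref{algebraic_implies_bound}.
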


To explain the method of proof, we briefly recall Coleman's proof of effective Chabauty \cite{coleman:chabauty}. There, he gave a bound for the number of zeroes of $G:=\int \omega $ in a residue disk $]\overline{z}[$ for $\omega $ a global differential. 
This bound is derived from understanding some piece of the Newton polygon of $G$: specifically, from bounding the length of the slope $-1$ segment of the Newton polygon. By \emph{length} of a segment, we take the usual convention: the length of the projection of the segment onto the $x$-axis. We recall the following classical result: 
\begin{Proposition}
Suppose the slope $\leq -1$ segment of the Newton polygon has endpoint $(n,m)$. Then $G$ has at most $n$ zeroes in $B(0,|p|)$.
\end{Proposition}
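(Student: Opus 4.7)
The plan is to invoke the classical $p$-adic Newton polygon theorem for convergent power series. Write $G(z) = \sum_{i \geq 0} a_i z^i$ and let $N(G)$ denote its Newton polygon, i.e., the lower convex hull of $\{(i, v_p(a_i)) : a_i \neq 0\}$ in $\mathbb{R}^2$. The essential fact to invoke is: for each slope $\lambda$ appearing in $N(G)$, the number of zeros $\alpha$ of $G$ (counted with multiplicity) satisfying $v_p(\alpha) = -\lambda$ equals the horizontal length of the slope-$\lambda$ segment. A zero $\alpha$ lies in $B(0,|p|)$ precisely when $v_p(\alpha) \geq 1$, equivalently when its corresponding Newton polygon slope is $\leq -1$. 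Hence the total number of zeros in $B(0,|p|)$ equals the combined horizontal length of all segments of $N(G)$ of slope $\leq -1$.

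By hypothesis, the rightmost vertex of this slope-$\leq -1$ portion is $(n,m)$; since its leftmost vertex has $x$-coordinate $\geq 0$, the combined horizontal length is at most $n$, yielding the stated bound. For the underlying Newton polygon fact itself, I would either cite a standard reference or derive it from Weierstrass preparation: over the closed disk of radius $|p|$ one factors $G = P \cdot u$ with $P$ a polynomial whose roots, with multiplicity, are exactly the zeros of $G$ in the disk and $u$ a unit, and one then checks that $\deg P$ equals the total horizontal length of the slope-$\leq -1$ part of $N(G)$.

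The only real ``obstacle'' is a matter of bookkeeping: one must fix whether $B(0,|p|)$ denotes the open or closed disk, and whether $(n,m)$ is understood as the last vertex before the slopes strictly exceed $-1$. With either of the standard conventions used in Coleman's work \cite{coleman:chabauty}, the count of zeros in $B(0,|p|)$ is bounded by the horizontal coordinate $n$ of the endpoint, and the proposition follows.
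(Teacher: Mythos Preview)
Your proposal is correct and matches the paper's approach: the paper gives no argument of its own but simply cites Koblitz \cite[IV.4]{koblitz2012p}, and what you have written is exactly the standard Newton polygon/Weierstrass preparation argument found in such references. If anything, you have supplied more detail than the paper does.
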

\begin{proof}
See e.g. \cite[IV.4]{koblitz2012p}.
\end{proof}
In particular, Coleman related the Newton polygon of $G$ to the zeroes of $\omega $ mod $p$, which can be bounded by elementary algebraic geometry.

The idea of the proof in the depth 2 case is similar. We want to bound the number of zeroes of a non-algebraic power series $G$ (from depth 2 Chabauty--Kim, see Proposition \ref{QC2input}) in a residue disk $]\overline{z}[$, or equivalently, understand the slopes of its Newton polygon. We would like to reduce this to a question about the slopes of something algebraic, but as $G$ involves double integrals, we have to replace simply taking the derivative by applying a more complicated differential operator $\mathcal{D}$. We show in Section \ref{sec:bounding} that for suitable `nice' differential operators, we can relate the Newton polygon of $G$ to the zeroes of $\mathcal{D}(G)$.
We then want to find a $\mathcal{D}$ that sends our power series $G$ to some algebraic function whose zeroes we can bound mod $p$. We give constructions of $\mathcal{D}$ in the general case, hyperelliptic case, and hyperelliptic and integral points case in the three subsequent sections.

\subsection*{Acknowledgements}
We thank Kevin Buzzard, Minhyong Kim and Jan Vonk for helpful suggestions.  Balakrishnan is supported in part by NSF grant DMS-1702196, the Clare Boothe Luce Professorship (Henry Luce Foundation), and Simons Foundation grant \#550023.

\section{Explicit Chabauty--Kim at depth 2}\label{sec:CKdepth2}

We begin with a brief review of a few essential results from the Chabauty--Kim method \cite{kim:siegel, kim:chabauty}. Associated to a pointed curve $(X,b)$ over $\Q$, with a good reduction outside a finite set $T_0$, and a prime $p$ of good reduction, we have a map
\[
j_n :X(\Q )\to H^1 (G_T ,U_n )
\] 
where $T:=T_0 \cup \{p \}$, $G_T$ is the Galois group of the maximal extension of $\Q$ unramified outside $T$, and $U_n $ is the maximal $n$-unipotent quotient of the $\Q_p $ pro-unipotent completion of $\pi _1 ^{\acute{e}t}(X_{\overline{\Q }},b)$. We also have local maps 
\[
j_{n,v}:X(\Q _v )\to H^1 (G_{\Q _v},U_n )
\]
for $v$ in $T_0 $  and 
\[
j_{n,p}:X(\Q _p )\to H^1 (G_{\Q _p },U_n )
\]
(for the definition of $H^1 _f (G_{\Q _p },U_n )$ see \cite{kim:siegel}). We define 
\[
X(\Q _p )_n := j_{n,p}^{-1}\left( \loc _p (\cap _{v\in T_0 }\loc _v ^{-1}(X(\Q _v ))) \right) \subset X(\Q _p ),
\]
where $\loc _v $ is the localisation map from $H^1 (G_T ,U_n )$ to $H^1 (G_v ,U_n )$. By construction, the set of rational points 
$X(\Q )$ is a subset of $ X(\Q _p )_n $ for all $n$.

The behaviour of the maps $j_{n,v}$ is fundamentally different depending on whether or not $v=p$. In the $v\neq p$ case, we have the following theorem, due to Kim and Tamagawa \cite{kim2008component}.
\begin{Theorem}[Kim-Tamagawa]
Let $v$ be a prime not equal to $p$. Then for all $n$, $\im (j_{v,n})$ is finite.
\end{Theorem}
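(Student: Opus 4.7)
The plan is to show that, for $v\neq p$, the map $j_{n,v}\colon X(\Q _v)\to H^1(G_v,U_n)$ factors through a finite set, namely the strata of the special fibre of a regular model of $X$ over $\mathcal{O}_v$, and is therefore of finite image.

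First, I would take a regular proper model $\mathcal{X}/\mathcal{O}_v$ of $X$ and, after passing to a finite extension $K/\Q _v$ if needed, assume $\mathcal{X}$ is semistable. This passage is harmless for the finiteness question: finiteness of $\mathrm{im}(X(K)\to H^1(G_K,U_n))$ forces finiteness of $\mathrm{im}(j_{n,v})$, since the fibres of the non-abelian restriction $H^1(G_{\Q _v},U_n)\to H^1(G_K,U_n)$ are controlled by the cohomology of the finite group $\mathrm{Gal}(K/\Q _v)$ acting on $U_n^{G_K}$, and an induction along the central series of $U_n$ cuts this down to finitely many classes relevant to the image.

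Next, the geometric heart of the argument: show that if $x,y\in X(\Q _v)$ reduce to the same stratum of the special fibre $\mathcal{X}_s$ (the smooth locus of an irreducible component, or a node), then $j_{n,v}(x)=j_{n,v}(y)$. Since $\mathcal{X}_s$ has only finitely many strata, this gives the theorem. I would prove stratum-constancy by induction on $n$, using the central extensions $1\to Z_n\to U_n\to U_{n-1}\to 1$ and the resulting twisted non-abelian cohomology sequences. The base case $n=1$ reduces to the classical statement that for $v\neq p$ the Kummer image $J(\Q _v)\otimes_{\Z}\Z _p\to H^1(G_v,V_pJ)$ is finite, which holds because $J(\Q _v)$ is an $\ell$-adic Lie group with $\ell$ coprime to $p$ so that $J(\Q _v)\otimes\Z _p$ is finite. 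The inductive step requires a $G_v$-equivariant specialization isomorphism identifying the pro-unipotent \'etale path torsor $\pi _1^{\mathrm{un}}(X_{\overline{\Q _v}};b,x)$ with a log-unipotent path torsor on the special fibre $\mathcal{X}_s^{\mathrm{log}}$, in a way that depends only on the stratum containing the reduction of $x$.

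The main obstacle is precisely this specialization at nodes of $\mathcal{X}_s$: one must use tangential base points on the log smooth degeneration and the Kummer log-\'etale fundamental group to make sense of unipotent paths crossing a node, and verify Galois equivariance throughout. This log-geometric input — comparing the $\Q _p$-unipotent completion of the generic-fibre fundamental group with its special-fibre log analogue for $p$ coprime to the residue characteristic — is the technical core of Kim and Tamagawa's original proof, and is what makes finiteness genuinely different from the $v=p$ case where no such specialization can exist.
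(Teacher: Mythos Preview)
The paper does not prove this theorem; it is quoted as a result of Kim and Tamagawa with a citation to \cite{kim2008component} and no argument given. So there is no ``paper's own proof'' to compare against. What the paper \emph{does} provide, immediately afterwards, is a sketch of the sharper Lemma bounding $n_v$ by the number of irreducible components of the special fibre of a regular semistable model over a finite extension. That sketch is close in spirit to your proposal: pass to an extension $L$ where $X$ acquires stable reduction, observe that $H^1(G_K,U_n)\to H^1(G_L,U_n)$ is injective (this is stronger and cleaner than your ``finite fibres'' claim), and then show---using Oda's description of the Galois action on the prime-to-$v$ fundamental group via the dual graph---that two points reducing to the same irreducible component have the same class.

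Your outline is broadly correct and is indeed the shape of the Kim--Tamagawa argument. Two small remarks. First, on a \emph{regular} model, $\Q_v$-points always reduce to smooth points of the special fibre, never to nodes; so your ``main obstacle'' of specialization at nodes and the attendant log-geometric machinery is not needed for the finiteness statement as phrased (it matters for tangential base points or for comparing across components, but not for showing the image is finite). Second, your base case is slightly understated: for $v\neq p$ the map $J(\Q_v)\to H^1(G_v,V_pJ)$ is not merely of finite image but is zero, since $J(\Q_v)$ has an open pro-$\ell$ subgroup with $\ell\neq p$ and the target is a $\Q_p$-vector space. The paper's sketch sidesteps your induction on $n$ entirely by working directly with the full prime-to-$v$ \'etale fundamental group and invoking Oda.
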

In fact, one can bound the image of $j_{n,v}$ in terms of the reduction data of the curve as follows. Let $L$ be a finite extension of $K_v$ over which $X_{K_v}$ acquires stable reduction. Let $\mathcal{X}/\mathcal{O}_L$ be a regular semistable model, and let $V(\mathcal{X}_{k_L})$ denote the set of irreducible components of the special fibre. 
Since the model is regular, specialisation induces a well-defined map
\[
r_v :X(K_v )\to V(\mathcal{X}_{k_L}).
\]
For $v$ a prime of bad reduction, we define $n_v \in \mathbb{Z}_{>0}$ to be the size of the image of $X(\Q _v )$ under $j_{2,v}$ in $H^1 (G_v ,U_2 )$. 
\begin{Lemma} 
With notation as above,
\[
n_v \leq \im (r_v ).
\]
\end{Lemma}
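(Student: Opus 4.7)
The plan is to show that the local Kummer-type map $j_{2,v}\colon X(\Q_v)\to H^1(G_v,U_2)$ factors through the specialisation map $r_v\colon X(\Q_v)\to V(\mathcal{X}_{k_L})$; granted this, the inequality $n_v=\#\im(j_{2,v})\leq\#\im(r_v)$ is automatic.

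First I would reduce to the case of $L$-points. Restriction along $G_L\hookrightarrow G_v$ gives a commutative square whose top row is $j_{2,v}\colon X(\Q_v)\to H^1(G_v,U_2)$ and whose bottom row is the analogous map $X(L)\to H^1(G_L,U_2)$; since $r_v$ is defined after base change to $L$ anyway, and since two $\Q_v$-points have the same $j_{2,v}$-class whenever their images in $H^1(G_L,U_2)$ coincide and are $\Gal(L/K_v)$-equivariantly identified (which is automatic for points in $X(\Q_v)$), it suffices to prove the factorisation of the $L$-point map through the component map to $V(\mathcal{X}_{k_L})$.

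The core input is the Kim--Tamagawa theorem of \cite{kim2008component}, whose proof actually yields more than the finiteness stated above: it shows that for any two points $x,x'\in X(L)$ specialising to the same irreducible component of the regular semistable model $\mathcal{X}_{k_L}$, the classes $j_{n,L}(x)$ and $j_{n,L}(x')$ in $H^1(G_L,U_n)$ coincide. The proof goes via the nearby-cycles description of the $\Q_p$-pro-unipotent fundamental group torsor: on the smooth locus of a fixed component, the restriction of the relevant lisse sheaf of unipotent torsors is (after pullback through the specialisation) canonically trivialised, so any two $L$-points of the same component yield the same Galois cohomology class. Specialising this to $n=2$ gives precisely the factorisation we need.

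Combining the two steps, $j_{2,v}$ factors through $r_v$, whence the bound. The only real content is the invocation of Kim--Tamagawa, so the main (and essentially only) obstacle is to extract from their argument the factorisation through components, rather than mere finiteness; everything else is formal.
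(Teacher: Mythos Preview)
Your proposal follows essentially the same two-step strategy as the paper's own sketch: first reduce to $L$-points via injectivity of the restriction map $H^1(G_v,U_n)\to H^1(G_L,U_n)$ (your phrasing of this step is a bit roundabout, but the content is the same), and then argue that two points lying on the same irreducible component of the special fibre of a regular semistable model give the same cohomology class. The only difference is the technical input for the second step: the paper invokes Oda's description of the Galois action on $\pi_1^{\acute{e}t,(v')}(X_{\overline{K}},b)$ via the dual graph of the semistable model rather than extracting the statement from the Kim--Tamagawa proof, but these are closely related and the paper itself defers the full details to \cite{BD}.
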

A detailed proof of this lemma will appear in \cite{BD}. However, for the sake of completeness, we briefly indicate the method of proof. First, if $L|K $ is a finite extension, then it is easy to show that $H^1 (G_K, U_n )\to H^1 (G_L ,U_n )$ is injective, hence one reduces to the case where $X$ has stable reduction. In this case, one can use the description of the action of $G_L$ on $\pi _1 ^{\acute{e}t, (v')}(X_{\overline{K}},b)$ (the maximal prime-to-$v$ quotient of $\pi _1 ^{\acute{e}t}(X_{\overline{K}},b)$) in terms of the dual graph of a regular semistable model given in \cite{oda1995note} to deduce that if points $b_1 $ and $b_2 $ lie on a common irreducible component of $V(\mathcal{X}_{k_L })$, then the class of $[\pi _1 ^{\acute{e}t,(v')}(X_{\overline{K}};b_1 ,b_2 )] $ in $H^1 (G_L ,\pi _1 ^{\acute{e}t,(v')}(X_{\overline{K}},b_1 )$ is trivial. This straightforwardly implies the lemma.

The finiteness of the maps $j_{n,v}$ allows us to partition the set $X(\Q _p )_n $ as follows. We refer to a tuple $\alpha =(\alpha _v )_v $ in 
$\prod _{v\in T_0 }\im (j_{n,v})$ as a collection of \textit{local conditions}, and define 
\[
X(\Q _p )_{\alpha }:= j_{n,p}^{-1}\left( \loc _p (\cap _{v\in T_0 }\loc _v ^{-1}({\alpha _v })) \right) \subset X(\Q _p ).
\]
By construction, $X(\Q _p )_n$ is the disjoint union of the $X(\Q _p )_{\alpha }$ for $\alpha $ a collection of local conditions.
The bound in Theorem \ref{maintheorem} comes from a bound on $\# X(\Q _p )_{\alpha }$ in the case of $n = 2$, together with a bound on the number of local conditions, i.e. on the size of $\prod _{v\in T_0 }j_{2,v}X(\Q _v )$.

\subsection{Local structure at $p$}
The power series $G$ in the introduction is from the following result of \cite{qcpart2}:
\begin{Proposition}[{\cite[Prop. 5]{qcpart2}}]\label{QC2input}
Let $X/\Q$ be a curve of genus $g > 1$.  Suppose $X$ satisfies condition A or condition B.  Let $\omega _0 ,\ldots ,\omega _{2g-1}\in H^0 (X,\Omega (D))$ be differentials of the second kind forming a basis of $H^1 _{\dr}(X)$, where $D$ is an effective divisor.  Then, for all local conditions $\alpha $, there are constants $a_{ij}$ and $a_i$, a differential of the third kind $\eta $, and a function $h\in H^0 (X,\mathcal{O}(2D))$ such that
$$
X(\mathbb{Q}_p )_{\alpha }\subset \left\{ z\in X(\mathbb{Q}_p ):G(z)=0 \right\},$$ where $$G(z):=\sum _{0\leq i,j <2g} a_{ij}\int ^z _b \omega _i \omega _j +\sum _{0\leq i<2g}a_i \int ^z _b  \omega _i +\int ^z _b \eta +h(z).$$
\end{Proposition}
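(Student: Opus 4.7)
The plan is to apply the Chabauty--Kim dimension count at depth $2$ under Condition A or B, and then translate the resulting abstract vanishing statement on the Selmer variety into an explicit Coleman-integral expression. The first ingredient is the dimension bound: under either condition one shows, as in \cite[Proposition 1]{qcpart2}, that
\[
\dim H^1_f(G_T, U_2) < \dim H^1_f(G_{\Q_p}, U_2).
\]
Condition A provides an extra N\'eron--Severi class on $J_{\bar{\Q}}$, producing an additional relation in depth~$2$ Selmer; Condition B makes $H^1_f(G_T, H^2_{\acute{e}t}(X \times X_{\bar{\Q}}, \Q_p(1)))$ vanish, killing the only positive-dimensional contribution in that degree. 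Combined with $r = g$, which controls the depth~$1$ (abelian) part, these yield the strict inequality.

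Via the Bloch--Kato exponential, one identifies $H^1_f(G_{\Q_p}, U_2)$ with the $p$-adic affine space $U_2^{\dR}/F^0$, and the image of $\loc_p$ is contained in a proper affine subspace coming from the image of $H^1_f(G_T, U_2)$. Consequently, for each choice of local conditions $\alpha = (\alpha_v)_{v \in T_0}$, the set
\[
S_\alpha := \loc_p\Bigl( \bigcap_{v \in T_0} \loc_v^{-1}(\alpha_v) \Bigr)
\]
is contained in a proper affine subspace of $U_2^{\dR}/F^0$, so there is a nonzero affine-linear function $F_\alpha$ vanishing on $S_\alpha$. Pulling $F_\alpha$ back along $j_{2,p}$ produces a $p$-adic analytic function on $X(\Q_p)$ vanishing on $X(\Q_p)_\alpha$. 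To put it in the stated shape, I would invoke the explicit description of $j_{2,p}^{\dR} : X(\Q_p) \to U_2^{\dR}/F^0$ via iterated Coleman integration: with respect to the basis $\omega_0, \ldots, \omega_{2g-1}$ of $H^1_{\dR}(X)$, the depth~$1$ coordinates are single Coleman integrals $\int_b^z \omega_i$ and the depth~$2$ coordinates are suitable combinations of iterated Coleman integrals $\int_b^z \omega_i \omega_j$, so that substituting into $F_\alpha$ yields an expression of the claimed shape up to the correction terms discussed next.

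Finally, since the $\omega_i$ are only differentials of the second kind, with poles along the effective divisor $D$, the naive iterated integrals $\int_b^z \omega_i \omega_j$ are not a priori Coleman functions globally on $X(\Q_p)$: the ambiguity in the local primitives near a pole of $\omega_j$ is measured by its residue there. Replacing the naive iterated integrals by genuine global Coleman functions therefore forces the addition of a single integral $\int_b^z \eta$ of a differential of the third kind $\eta$ (whose residues encode the logarithmic discrepancies) together with an algebraic function $h \in H^0(X, \mathcal{O}(2D))$ absorbing the remaining boundary terms, the divisor $2D$ arising because $G$ is built from products of two differentials with poles along $D$. The main obstacle is precisely this last step: producing the explicit $\eta$ and $h$ requires carefully tracking how poles of the $\omega_j$ propagate through iterated integration and converting the abstract linear vanishing condition on $U_2^{\dR}/F^0$ into an honest Coleman-analytic identity on $X(\Q_p)$.
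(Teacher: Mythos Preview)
The paper does not supply its own proof of this proposition: it is imported verbatim from \cite[Prop.~5]{qcpart2} and used as a black box. So there is no proof in the paper to compare against.

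Your sketch is a reasonable outline of the general Chabauty--Kim mechanism behind such a statement---dimension inequality, identification of $H^1_f(G_{\Q_p},U_2)$ with $U_2^{\dR}/F^0$, and the iterated-integral description of $j_{2,p}$. Two points deserve more care, though. First, you claim $S_\alpha$ lies in a proper \emph{affine subspace} and hence is cut out by an affine-linear $F_\alpha$. In general the image of the global Selmer variety is only a Zariski-closed subvariety, so a priori one obtains only a polynomial function; the fact that $G$ is \emph{linear} in the single and double integrals is not automatic from the dimension count but is a feature of the specific construction in \cite{qcpart2}, where $G$ arises as the difference between a global $p$-adic height and the sum of its local components. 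Second, your explanation of $\eta$ and $h$ as artefacts of the $\omega_i$ having poles is not quite the story in \cite{qcpart2}: there $h$ is the local height at $p$ (a genuine Coleman function built from the Hodge filtration and a splitting), and $\int\eta$ packages the contribution from a correspondence realising the extra N\'eron--Severi or Tate class furnished by Condition A or B. The bound $h\in H^0(X,\mathcal{O}(2D))$ is then a computation with the explicit height formula, not a pole-propagation bookkeeping. So while your high-level picture is right, the precise provenance of the terms $\eta$ and $h$, and the linearity of $G$, come from the height-pairing construction rather than from abstract Selmer-variety considerations.
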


Hence to prove Theorem \ref{maintheorem}, it is enough to prove the following: for any curve $X$ of genus $g>1$, one can choose an effective divisor $D$ and a collection of differentials of the second kind $\omega _0 ,\ldots ,\omega _{2g-1}\in H^0 (X,\Omega (D))$ with the following properties:
\begin{itemize}
\item 
The $\omega _i $ form a basis of $H^1 _{\dr}(X)$.
\item For any differential of the third kind $\eta \in H^0 (X,\Omega(D))$ and $h\in H^0 (X,\mathcal{O}(2D))$, and any constants $a_i $ and $a_{ij}$, the number of zeroes of $G(z)$ is at most  
\[
\kappa _p \# X(\mathbb{F}_p )(16g^3+15g^2-16g+10).
\]

\end{itemize}

\subsection{Proof of Corollary \ref{explicit_cor}}
In this subsection we prove that Theorem \ref{maintheorem} implies Corollary \ref{explicit_cor}.
\begin{Lemma}
Let $v\neq p$ be a prime of potential good reduction. Then, for all $n$,the map
\[
j_{n,v}:X(\Q _v )\to H^1 (G_v ,U_n )
\]
is trivial.
\end{Lemma}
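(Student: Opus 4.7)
The plan is to reduce to the case where $X$ has honest good reduction over $K_v$, and then invoke the argument sketched immediately after the lemma bounding $n_v$ by $|\im(r_v)|$. First, I would pick a finite extension $L/K_v$ over which $X_{K_v}$ acquires good reduction, and use the injectivity statement (mentioned in the discussion after the Kim--Tamagawa theorem) that $H^1(G_{K_v},U_n) \to H^1(G_L,U_n)$ is injective. Base-changing the basepoint $b$ and any $x \in X(K_v)$ to $L$, the problem reduces to showing that the image of $j_{n,v}$ in $H^1(G_L,U_n)$ is trivial when $\mathcal{X}/\mathcal{O}_L$ is smooth.

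Next, in the good reduction case, a regular semistable (in fact smooth) model has a single irreducible component, so every pair of $L$-points lies on the same component of the special fibre. By the same input used to prove the previous lemma---Oda's description of the $G_L$-action on the prime-to-$v$ \'etale fundamental groupoid $\pi_1^{\et,(v')}(X_{\overline{K}};b,x)$ in terms of the dual graph of the semistable model \cite{oda1995note}---it follows that the class of the torsor $[\pi_1^{\et,(v')}(X_{\overline{K}};b,x)]$ in $H^1(G_L, \pi_1^{\et,(v')}(X_{\overline{K}},b))$ is trivial for every $x$.

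Finally, since $v \neq p$, the quotient map $\pi_1^{\et}(X_{\overline{K}},b) \to U_n(\Q_p)$ factors through the maximal pro-$p$ quotient, and in particular through the prime-to-$v$ quotient $\pi_1^{\et,(v')}(X_{\overline{K}},b)$. Functoriality of non-abelian cohomology then ensures that the trivial class in $H^1(G_L, \pi_1^{\et,(v')})$ maps to the trivial class in $H^1(G_L, U_n)$. Combined with the injectivity step, this yields triviality of $j_{n,v}$ on all of $X(\Q_v)$.

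The main obstacle I expect is purely bookkeeping: one must check that the factorisation through the prime-to-$v$ fundamental group behaves well at the level of torsors between two distinct basepoints (not just for the fundamental group at a single basepoint), and one must verify that Oda's theorem is being applied in the appropriate geometric setting (i.e., regular model over $\mathcal{O}_L$, with basepoints reducing to smooth points). Everything else in the argument is a direct transcription of the proof sketch already given for the bound $n_v \leq |\im(r_v)|$, specialised to the case $|\im(r_v)| = 1$.
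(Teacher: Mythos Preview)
Your proposal is correct and follows the same skeleton as the paper: pass to a finite extension $L/\Q_v$ of good reduction, show that $j_{n,L}$ has trivial image there, and descend via injectivity of restriction $H^1(G_{\Q_v},U_n)\to H^1(G_L,U_n)$. The difference is in which half of the argument is actually justified. You treat the injectivity of restriction as a black box from the earlier discussion and instead spell out the good-reduction case via Oda's description of the prime-to-$v$ fundamental groupoid (which is more machinery than strictly needed---the good reduction case follows directly from the fact that the path torsor is unramified and that Frobenius acts with nonzero weights on the graded pieces of $U_n$, but your route is not wrong). The paper does the reverse: it asserts triviality of $j_{n,w}$ in the good reduction case without comment, and instead proves injectivity of restriction explicitly, via the inflation--restriction sequence together with the observation that $U_n^{G_w}=1$ because each graded piece $U_n[i]$ is an unramified $G_w$-representation of pure weight $-i$. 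Both arguments are complete once the complementary half is filled in; the paper's weight argument for injectivity is the one new ingredient you would want to record if you were writing this up.
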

\begin{proof}
Let $K_w |\Q _v$ be an extension over which $X$ acquires good reduction.
Then the map
\[
j_{n,w}:X(K_w )\to H^1 (G_w ,U_n )
\]
has trivial image.
Recall from \cite[I.5.8]{serregc:1997} that, given a profinite group $G$, closed normal subgroup $H$, and $G$-group $A$, we get an exact sequence of pointed sets
\[
H^1 (G/H,A^H ) \to H^1 (G,A)\stackrel{\res }{\longrightarrow } H^1 (H,A).
\]
We apply this when $G=G_v $, $H=G_w$. We claim $U_n ^{G_w }=1$. To see this, note that it is enough to show that the graded pieces $U_n [i]$ of $U_n $ with respect to the central series filtration satisfy $U_n [i]^{G_w }=1$, which follows from the fact that $U_n [i]$ is an unramified representation of $G_w$ of weight $-i$.

Hence we deduce that the restriction map
\[
H^1 (G_v ,U_n )\stackrel{\res }{\longrightarrow } H^1 (G_w,U_n)
\]
is injective. The lemma thus follows from commutativity of the diagram
$$
\begin{tikzpicture}
\matrix (m) [matrix of math nodes, row sep=3em,
column sep=3em, text height=1.5ex, text depth=0.25ex]
{X(K_v) & H^1 (G_v ,U_n)  \\
 X(L_w) & H^1 (G_w,U_n ) \\ };
\path[->]
(m-1-1) edge[auto] node[auto] {$j_{n,v}$} (m-1-2)
edge[auto] node[auto] {} (m-2-1)
(m-1-2) edge[auto] node[auto] {$\res $} (m-2-2)
(m-2-1) edge[auto] node[auto] { $j_{n,w}$} (m-2-2);
\end{tikzpicture} $$
\end{proof}
Now let $X$ be as in Corollary \ref{explicit_cor}. Then all the $n_v$ are 1. The corollary follows from taking $p=3$ and using the Hasse--Weil estimate for $\# X(\F _3 )$.

\section{Bounding the number of zeroes via a differential operator}\label{sec:bounding}
In this section, we explain how to bound the zeroes of a power series $G$ by finding a bound on $\mathcal{D}(G)$ for a suitably `nice' (in a way we will make precise shortly) differential operator $\mathcal{D}$. The construction of a nice differential operator in the case when $G$ is the Coleman function from Proposition \ref{QC2input} will be given in the next section.

We begin by fixing notation. We denote by $v$ the $p$-adic valuation homomorphism $\mathbb{Q}_p ^\times \to \mathbb{Z}$.  We fix a point $b$ and a rational function $x$ which is a uniformizing parameter at $b$. We let $]b[$ denote the tube (or \emph{residue disk}) of $b$, i.e., the set of points reducing to $b$ mod $p$. Given an analytic function $F$ on $]b[$, we let $N_b (F)$ denote the number of $\mathbb{C}_p$-valued zeroes of $F$ in $]b[$.

Let $C_i $ denote the function 
$\mathbb{Q}_p [\! [x]\! ]\to \mathbb{Q}_p$
sending a power series to its $x^i $ coefficient.
By a differential operator, we will simply mean an element of the noncommutative ring $\mathbb{Q}_p [\! [x]\! ][\frac{d}{dx}]$. By an algebraic differential operator, we will mean a differential operator in the image of $K(X)[\frac{d}{dx}]$. The \textit{order} of a differential operator will refer to its  degree as a polynomial in $\frac{d}{dx}$, when given in the form $\sum_{i=0}^N a_i (\frac{d}{dx})^i $, for $a_i \in \mathbb{Q}_p [\! [x]\! ]$. 
\begin{Definition}
A differential operator $\mathcal{D}=\sum _{i=0}^N g_i \frac{d^i }{dx^i }\in \mathbb{Q}_p [\! [x]\! ][\frac{d}{dx}]$ is \textit{nice} if all the $g_i $ are in $\mathbb{Z}_p [\! [x]\! ]$, and $g_N$ is in $\mathbb{Z}_p [\! [x]\! ]^\times $. 
\end{Definition}
The main result of this section is the following proposition, which shows that one may use nice differential operators to bound the zeroes of power series, in analogy with Coleman's use of differentiation.
\begin{Proposition}\label{algebraic_implies_bound}
Let $G$ be a power series in $\mathbb{Q}_p [\! [x]\! ]$. Let $\mathcal{D}$ be a nice differential operator of order $N$. Suppose $\mathcal{D}(G)$ is an algebraic function with no poles on $]b[$. Then the number of zeroes of $G$ in $]b[$ is at most 
$\kappa _p (N_b (\mathcal{D}(G))+N)$. 
\end{Proposition}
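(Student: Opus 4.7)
The plan is to adapt Coleman's Newton-polygon argument to the higher-order setting: bound the length $\lambda(G)$ of the slope $\leq -1$ segment of the Newton polygon of $G$ by $N_b(\mathcal{D}(G))+N$, and then translate that slope bound into an open-disk zero count with a $p$-adic analytic factor of $\kappa_p$. Before either step I would normalize: since $g_N\in\mathbb{Z}_p[\! [x]\! ]^\times$ has no zeros on $]b[$, dividing $\mathcal{D}$ by $g_N$ preserves niceness and does not change $N_b(\mathcal{D}(G))$, so we may assume $\mathcal{D}=\tfrac{d^N}{dx^N}+\sum_{i<N}g_i\tfrac{d^i}{dx^i}$ with $g_i\in\mathbb{Z}_p[\! [x]\! ]$.

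The main technical step is then the Newton-polygon comparison $\lambda(\mathcal{D}(G))\geq\lambda(G)-N$. Writing $G=\sum_n a_n x^n$, the $m$th coefficient $h_m$ of $H:=\mathcal{D}(G)$ equals $\tfrac{(m+N)!}{m!}a_{m+N}$ plus a $\mathbb{Z}_p$-integral combination of $a_j$'s with $j\leq m+N-1$ coming from the subleading terms $g_i\tfrac{d^i}{dx^i}$. Since the subleading contributions are integral, the ultrametric inequality forces $v(h_m)$ to be at least the minimum of these contributions; a direct bookkeeping then yields the desired comparison. Applying the classical Newton-polygon bound (the Proposition recalled just before this one) to $H$ gives that the slope $\leq -1$ length of $H$ is at most $N_b(H)$, hence $\lambda(G)\leq N_b(H)+N$. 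The factor $\kappa_p$ enters via a standard $p$-adic analytic bound converting $\lambda(G)$ into the total zero count on $]b[=\{v(x)>0\}$: the additional zeros with $0<v(x)<1$ correspond to Newton-polygon slopes in $(-1,0)$, whose number is controlled by $(\kappa_p-1)\lambda(G)$ thanks to the coefficient-growth estimate $v(a_n)\geq -v(n!)+\mathrm{const}$ imposed on $G$ by the ODE $\mathcal{D}(G)=H$ together with the niceness of $\mathcal{D}$ and the algebraicity of $H$.

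The hard part is the Newton-polygon comparison. The factorial factor $(m+N)(m+N-1)\cdots(m+1)$ acquires positive $p$-adic valuation whenever $p$ divides one of $m+1,\ldots,m+N$, which raises $v(h_m)$ above the naive shifted value $v(a_{m+N})$ and could in principle shorten the slope $\leq -1$ segment of $H$ by more than $N$. The content of the lemma is that, with a careful accounting of these factorial raisings and of the $\mathbb{Z}_p$-integrality of the $g_i$, the loss in $\lambda$ is at most $N$; pinning down this bookkeeping, together with the precise constant $\kappa_p$ produced by the $p$-adic analytic step, is the delicate part of the argument.
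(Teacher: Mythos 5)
Your proposal breaks down at its main technical step: the claimed additive comparison ``$\lambda(\mathcal{D}(G))\geq\lambda(G)-N$, i.e.\ the loss in slope $\leq -1$ length is at most $N$'' is false, and it is exactly the cancellation phenomenon that the paper's proof is designed to get around. Concretely, take $p$ odd, $c\in\mathbb{Z}_p^\times$, $\mathcal{D}=\frac{d}{dx}+c$ (nice, of order $N=1$), and $G=x^p-\frac{p}{c}\,x^{p-1}$. Then $G$ has $p$ zeroes of valuation $\geq 1$ (namely $x=0$ with multiplicity $p-1$ and $x=p/c$), so $\lambda(G)=p$, while
\[
\mathcal{D}(G)=cx^p-\frac{p(p-1)}{c}\,x^{p-2}=x^{p-2}\Bigl(cx^2-\frac{p(p-1)}{c}\Bigr)
\]
has only $p-2$ zeroes of valuation $\geq 1$, the other two having valuation $\tfrac12$; so the loss is $2>N$. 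The mechanism is the one the paper warns about just before its key lemma: the leading contribution $\frac{(m+N)!}{m!}a_{m+N}$ to $h_m$ is raised in valuation by the factorial factor (here $v(p)=1$ at $m=p-1$), so it can be exactly cancelled by the $\mathbb{Z}_p$-integral subleading terms (here $c\,a_{p-1}$), and $h_{p-1}=p-c\cdot\frac{p}{c}=0$. Your appeal to the ultrametric inequality only bounds $v(h_m)$ from \emph{below}; what the comparison needs is an upper bound at the critical index, which fails precisely when valuations coincide. Worse, iterating such cancellations at indices divisible by higher powers of $p$ makes the loss grow like $N+\frac{p-1}{p-2}\log_p(\lambda(G))$, which is unbounded. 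This is why the paper proves only the multiplicative statement $M<\kappa_p\bigl(N+\min S(\mathcal{D}(G))\bigr)$, via the sets $T(k)$, the decreasing sequence $M_0>M_1>\cdots>M_n$ of potential cancellation indices, the estimate $M-M_n\leq v(M!/M_n!)$, and Lemmas \ref{trivial} and \ref{also_trivial}: the factor $\kappa_p$ is the price of cancellation, not (as in your third step) of zeroes of fractional valuation.

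Two further symptoms of the same gap. First, your argument never uses $p>2$, but the paper notes ($\mathcal{D}=d/dx-1$, $G=\exp(x)+1$, $p=2$) that an implication of the form ``$\mathcal{D}(G)$ has small slopes $\Rightarrow$ $G$ has small slopes'' genuinely fails at $p=2$, so any correct proof must invoke oddness of $p$ somewhere; in the paper it enters through the factor $\frac{p-1}{p-2}$ when solving $(M-M_n)\frac{p-2}{p-1}\leq\log_p(M_n)$. Second, your mechanism for producing $\kappa_p$ --- that the zeroes of $G$ with $0<v(x)<1$ number at most $(\kappa_p-1)\lambda(G)$ --- is also false as stated: $G=(x^2-p)^k$ with $\mathcal{D}=d/dx$ has $\lambda(G)=0$ but $2k$ zeroes of valuation $\tfrac12$. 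The paper in fact never bounds the fractional-valuation zeroes of $G$ at all; its argument controls the slope $\leq-1$ length $M$ (which is what the application to $X(\Q_p)$ requires, since $\Q_p$-points of a residue disk have parameter in $p\mathbb{Z}_p$), and $N_b(\mathcal{D}(G))$ enters only as an upper bound for $\min S(\mathcal{D}(G))$ via Lemma \ref{lemma:modp_bound}.
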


The proof of this proposition will occupy the remainder of the section.

\begin{Lemma}\label{lemma:modp_bound}
Let $F\in \mathbb{Q}_p [\! [x]\! ]$ come from a nonzero element of $\mathbb{Q}_p (X)$ without poles in $]b[$. Then $\{ v (C_i (F)) :i\geq 0 \}$ is bounded below, and the least $i$ such that $v (C_i (F))$ attains this bound is less than or equal to $N_b (F)$.
\end{Lemma}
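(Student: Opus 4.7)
The plan is to normalize $F$ to have integral coefficients and then apply the $p$-adic Weierstrass preparation theorem in $\mathbb{Z}_p[\![x]\!]$. Since $X$ has good reduction at $p$, fix a smooth proper model $\mathfrak{X}/\mathbb{Z}_p$, and let $a$ be the order of $F$ at the generic point of the special fibre. Then $F':=p^{-a}F\in\mathcal{O}_{\mathfrak{X},\bar b}$: the hypothesis that $F$ has no poles on $]b[$ ensures no horizontal pole at $\bar b$, while the choice of $a$ kills any vertical pole, and $\bar{F'}\not\equiv 0$ by maximality of $a$. Completing at $\bar b$ yields a power series $F'=\sum d_i x^i\in\mathbb{Z}_p[\![x]\!]$ whose reduction mod $p$ is nonzero.

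Set $m:=\ord_{\bar b}(\bar{F'})$, so $\bar{d_i}=0$ for $i<m$ and $\bar{d_m}\neq 0$. Then $v(d_i)\geq 1$ for $i<m$, $v(d_m)=0$, and $v(d_i)\geq 0$ for all $i$. Since $c_i=p^{-a}d_i$, the set $\{v(c_i)\}$ is bounded below by $-a$, and the minimum $-a$ is first attained at $i=m$, establishing the boundedness assertion and identifying the minimizing index.

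For the inequality $m\leq N_b(F)$, I would apply the $p$-adic Weierstrass preparation theorem in $\mathbb{Z}_p[\![x]\!]$: write $F'=U\cdot P$ with $U\in\mathbb{Z}_p[\![x]\!]^\times$ and $P\in\mathbb{Z}_p[x]$ a distinguished polynomial of degree $m$ (i.e.\ monic of degree $m$ with all other coefficients in $p\mathbb{Z}_p$). The unit $U$ has no zeros in $|x|<1$, since $U$ and $U^{-1}$ both converge there and their product equals $1$. The Newton polygon of $P$ shows that all $m$ of its roots have positive $p$-adic valuation, hence lie in $]b[$. Therefore $N_b(F)=N_b(F')=m$, which yields the desired inequality (in fact with equality).

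The main obstacle is the normalization step, which genuinely requires rationality of $F$ rather than mere analyticity on $]b[$: valuations of coefficients of a general analytic function on the open unit disk need not be bounded below, and it is the integral model that lets us extract a well-defined reduction $\bar{F'}$ whose order at $\bar b$ simultaneously controls both the minimal-coefficient index and the count of zeros via Weierstrass preparation.
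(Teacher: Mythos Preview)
Your proof is correct and takes essentially the same route as the paper: scale $F$ so that it lies in the local ring of the integral model at $\bar b$ with nonzero reduction, and identify the first index where the minimal coefficient valuation is attained with $\ord_{\bar b}$ of that reduction. The paper's argument is terser---it simply records this identification and leaves the link to $N_b(F)$ implicit---whereas you spell out the last step via Weierstrass preparation, in fact obtaining equality rather than merely the stated inequality.
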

\begin{proof}
There is some $\lambda $ in $K$ such that $\lambda F$ reduces to a nonzero rational function on $X_{\mathbb{F}_p }$. Since $F$ has no poles in $]b[$, the reduction mod $p$ of $\lambda F$ is the $\red _p (x)$-adic expansion of $\lambda F$ thought of as a rational section of $X_{\mathbb{F}_p }$, hence the least $i$ such that the minimum of $v (C_i (F))$ is attained is just the order of $\red _p (\lambda F)$.
\end{proof}
Now let $G$ be a power series with $\mathcal{D}(G) \in H^0 (X,\mathcal{O}(D))$ as in  Proposition \ref{QC2input}. Let $M$ denote the length of the slope $\leq -1$ part of the Newton polygon. Write $\mathcal{D}$ as 
$\sum_{i=1}^N g_i (\frac{d}{dx})^i $, where $g_N \in \mathbb{Z}_p [\! [x]\! ]^\times $. Recall the following well-known lemma:
\begin{Lemma}\label{trivial}
For any $n_1 \leq n_2 $,
$$v \left(\frac{n_2 !}{n_1 !} \right)\leq \log _p (n_1 )+\frac{n_2 -n_1}{p-1}.$$
\end{Lemma}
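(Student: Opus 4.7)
The plan is to reduce the inequality to Legendre's classical formula
\[
v(n!) = \frac{n - s_p(n)}{p-1},
\]
where $s_p(n)$ denotes the sum of the base-$p$ digits of $n$. Applying this formula to $n_1!$ and $n_2!$ and subtracting gives the exact identity
\[
v\!\left(\frac{n_2!}{n_1!}\right) = \frac{n_2 - n_1}{p-1} + \frac{s_p(n_1) - s_p(n_2)}{p-1},
\]
reducing the claim to the purely combinatorial estimate $s_p(n_1) - s_p(n_2) \leq (p-1)\log_p(n_1)$.

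Since $n_2 \geq n_1 \geq 1$ forces $s_p(n_2) \geq 1$, it suffices to show the one-variable inequality $s_p(n_1) \leq (p-1)\log_p(n_1) + 1$ for all $n_1 \geq 1$. Writing $n_1$ in base $p$ with $k+1$ digits (so $p^k \leq n_1 < p^{k+1}$) gives $s_p(n_1) \leq (p-1)(k+1)$ and $\log_p(n_1) \geq k$. The estimate is loosest when every digit is $p-1$, i.e.\ $n_1 = p^{k+1}-1$, and in that extremal case one checks that it is equivalent, after rearrangement and dividing by $p^{k+1}$, to
\[
\left(1 + \frac{1}{p-1}\right)^{p-1} \leq p.
\]

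This last inequality is the main (really, the only) obstacle. It is tight at $p=2$, and for $p \geq 3$ it follows immediately from the standard bound $(1+1/m)^m \leq e < 3 \leq p$. Once one has this, combining with the Legendre identity above yields the asserted inequality; everything else is bookkeeping. The argument also shows that the bound is essentially tight at $n_1 = 1$, which is consistent with the appearance of $\log_p(n_1)$ rather than $\log_p(n_1)+1$ in the statement.
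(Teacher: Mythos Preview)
Your approach via Legendre's formula is exactly the paper's. In fact you are more careful at one point: the paper drops $s_p(n_2)$ entirely and then asserts $s_p(n_1)\le (p-1)\log_p(n_1)$, which is false as written (take $n_1=p-1$, or even $n_1=1$). Your use of $s_p(n_2)\ge 1$ to reduce instead to the correct inequality $s_p(n_1)\le (p-1)\log_p(n_1)+1$ is the right repair.

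That said, your justification of this last inequality has a gap. The claim that ``the estimate is loosest when every digit is $p-1$, i.e.\ $n_1=p^{k+1}-1$'' is asserted without proof and is not true: for $p=3$ and one digit, $s_p(n_1)-(p-1)\log_p(n_1)$ equals $1$ at $n_1=1$ but only $2-2\log_3 2\approx 0.74$ at $n_1=2=p^{1}-1$. The genuine maximum over all $n_1\ge 1$ is $1$, attained at $n_1=1$ (as you yourself observe at the end), so checking only $n_1=p^{k+1}-1$ does not cover the other cases. A clean fix is induction on $n$: writing $n=pq+r$ with $0\le r\le p-1$ and $q\ge 1$, one has $s_p(n)=s_p(q)+r\le (p-1)\log_p(q)+1+r\le (p-1)\log_p(n)+1$ since $\log_p(n/q)\ge 1\ge r/(p-1)$. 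The base case $1\le n\le p-1$ is $n-1\le (p-1)\log_p(n)$; the function $n-1-(p-1)\log_p(n)$ is convex and vanishes at $n=1$, so it suffices to check $n=p-1$, and that case does rearrange to your inequality $(1+1/(p-1))^{p-1}\le p$. So your identified ``main obstacle'' is exactly the right one; only the reduction to it needs tightening.
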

\begin{proof}
Using Legendre's formula of $v(n!) = \frac{n - s(n)}{p-1},$ where $s(n)$ is the sum of digits in base $p$, it follows that
\begin{align*}v\left(\frac{n_2!}{n_1!}\right) = \frac{n_2-n_1}{p-1} + \frac{s(n_1)-s(n_2)}{p-1} &\leq \frac{n_2-n_1}{p-1} + \frac{s(n_1)}{p-1}\\
&\leq \frac{n_2-n_1}{p-1} + \frac{(p-1)\log_p(n_1)}{p-1}\\
&=\frac{n_2-n_1}{p-1} + \log_p(n_1).\end{align*}
\end{proof}

\begin{Lemma}\label{also_trivial}
Let $M$ be the length of the slope $\leq -1$ part of the Newton polygon of $G$. Suppose that $M>1$, and $i \leq M$ satisfies 
$$
v(C_i (G)) \leq v(C_M (G))+v (M!/i!).
$$
Then $i \geq \kappa _p ^{-1}M.$
\end{Lemma}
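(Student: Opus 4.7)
The plan is to combine the Newton polygon lower bound on $v(C_i(G))$ (coming from the fact that the slope $\leq -1$ segment ends at $(M, v(C_M(G)))$) with the hypothesis to squeeze $M - i$ from both sides, then apply Lemma \ref{trivial} and solve the resulting inequality.

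\textbf{Step 1: Newton polygon lower bound.} Since the slope $\leq -1$ part has length $M$, the point $(M, v(C_M(G)))$ is a vertex of the Newton polygon, and for every $i \leq M$ the slope of the chord from $(i, \mathrm{NP}(i))$ to $(M, v(C_M(G)))$ is $\leq -1$. Rearranging, this gives
\[
v(C_i(G)) \;\geq\; \mathrm{NP}(i) \;\geq\; v(C_M(G)) + (M-i).
\]

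\textbf{Step 2: Squeeze and feed into Lemma \ref{trivial}.} Combining with the hypothesis $v(C_i(G)) \leq v(C_M(G)) + v(M!/i!)$ yields $M - i \leq v(M!/i!)$. Applying Lemma \ref{trivial} (to dispose of $v(M!/i!)$) gives
\[
M - i \;\leq\; \log_p(i) + \frac{M-i}{p-1},
\]
which rearranges to $(M-i)\cdot \frac{p-2}{p-1} \leq \log_p(i)$, i.e.\ $M - i \leq \frac{p-1}{p-2}\log_p(i)$ (using $p \geq 3$). I should briefly note that this already forces $i \geq 1$: if $i=0$, the bound would read $M \leq 0$, contradicting $M > 1$.

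\textbf{Step 3: Dispose of the logarithm.} Since $\log(i) \leq i$ for all $i \geq 1$, we obtain
\[
M - i \;\leq\; \frac{p-1}{p-2}\cdot \frac{i}{\log(p)} \;=\; (\kappa_p - 1)\, i,
\]
so $M \leq \kappa_p \cdot i$, i.e.\ $i \geq \kappa_p^{-1} M$, as required.

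There is essentially no obstacle here; the argument is a short chain of elementary inequalities. The only point that requires care is checking that the two lower bounds on $v(C_i(G))$ (the Newton polygon one and the one obtained from Lemma \ref{trivial}) are compatible and force $i \geq 1$, after which the bound $\log(i) \leq i$ closes the argument. The slight asymmetry in the definition of $\kappa_p$ (natural log in $\log(p)$ versus $\log_p$ appearing from Lemma \ref{trivial}) is exactly absorbed by this step.
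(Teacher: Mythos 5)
Your proof is correct and follows essentially the same route as the paper's: the Newton polygon slope condition gives $M-i \leq v(C_i(G))-v(C_M(G))$, which combined with the hypothesis and Lemma \ref{trivial} yields $M-i \leq \log_p(i)+(M-i)/(p-1)$, and then $\log_p(i)\leq i/\log(p)$ closes the argument exactly as in the paper. Your additional remark ruling out $i=0$ is a harmless (and reasonable) bit of extra care that the paper leaves implicit.
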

\begin{proof}
Since $i\leq M$ and $M$ is the length of the slope $\leq -1$ part of the Newton polygon, we have 
$$
M-i \leq v(C_i (G))-v(C_M (G)) \leq v (M!/i!) .$$
This implies 
$\log _p (i)+(M-i)/(p-1) \geq M-i$,
by the previous lemma. Using the inequality  $\log _p (i)\leq i/\log (p)$, we get 
$$
\kappa _p i \geq M.
$$
\end{proof}
Given a power series $F$, let $S(F)=\{ i \geq 0: v(C_i (F))=\min \{ v(C_j (F)):j \geq 0 \} \}$ if this minimum exists, and take $S$ to be empty otherwise.

We now prove a key lemma which gives a quantitative relation between the Newton polygon of $G$ and the Newton polygon of $\mathcal{D}(G)$, when $\mathcal{D}$ is a nice differential operator. The idea of the proof is as follows. 
Let $s$ denote the minimum of $v(C_i (\mathcal{D}(G)))$. We would like to say that if the valuation of $C_M (G)$ is smaller than the valuation of $C_i (G)$ for all $i<M$, then the valuation of $C_{M-N}(\mathcal{D}(G))$ is smaller than that of $C_{i}(\mathcal{D}(G))$ for all $i<M-N$ (and hence $s\geq M-N$). 

This is not quite true, because when we apply $(\frac{d}{dx}) ^N $ to $C_M (G)x^M$, we increase the valuation by $v(M!/(M-N)!)$, so it may happen that there is some cancellation. However, for such cancellation to occur, there must be an $M_1 <M$ for which $v(C_{M_1 } (G))$ is within $v(M!/(M-N)!)$ of $v(C_M (G))$. Similarly, if $v(C_{M_1 -N}(\mathcal{D}(G)))$ is not smaller than $v(C_i (\mathcal{D}(G)))$ for all $i<M_1 -N$, then there must be some $M_2 <M_1 $ such that $v(C_{M_2 }(G))$ is close to $v(C_{M_1 }(G))$, and so on giving a sequence $M,M_1 ,\ldots $ until we get to $M_n \leq s+N$. By construction, the $v(C_{M_i}(G))$ are `close together', but since $M$ is the endpoint of the slope $\leq -1$ part of the Newton polygon they are also `far apart', and comparing these two conditions gives the lemma. 

Note that, without any additional conditions on $\mathcal{D}$ or $G$, to prove a result of the form ``$\mathcal{D}(G)$ has small slopes implies $G$ has small slopes"  it is necessary to assume $p>2$ (consider, for example, the case $\mathcal{D}=(d/dx) -1$ and $G=\exp (x)+1$). Note that, by Lemma \ref{lemma:modp_bound}, if $F$ is algebraic without poles on $]b[$, then $\min S(F) \leq \ord _{\overline{b}}(\red _p (F))$. Hence the following lemma implies Proposition \ref{algebraic_implies_bound}.

\begin{Lemma}
Let $p>2$, and let $M$ be the length of the slope $\leq -1$ part of the Newton polygon of $G$.   Suppose $S(\mathcal{D}(G))$ is nonempty.
Then
$$
M < \kappa _p (N+\min S(\mathcal{D}(G))).
$$

\end{Lemma}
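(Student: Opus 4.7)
The plan is to execute the heuristic outlined in the paragraph before the lemma. Set $s = \min S(\mathcal{D}(G))$ and $v_s = v(C_s(\mathcal{D}(G)))$; if $M \leq s + N$ already there is nothing to prove, so assume $M > s + N$. I will construct inductively a strictly decreasing sequence $M = M_0 > M_1 > \cdots > M_n$ terminating with $M_n \leq s + N$, writing $a_k := v(C_{M_k}(G))$ and $b_k := v(M_k!/(M_k - N)!)$, such that $a_{k+1} \leq a_k + b_k$ for each $k$.

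To construct $M_{k+1}$ from $M_k$, I expand
\[
C_{M_k - N}(\mathcal{D}(G)) = g_N(0)\,\frac{M_k!}{(M_k - N)!}\,C_{M_k}(G) \;+\; \sum_{\substack{0 \leq i \leq N,\, l \geq 0 \\ (i,l) \neq (N,0)}} C_l(g_i)\,\frac{j!}{(j - i)!}\,C_j(G),
\]
with $j = M_k - N - l + i$. In the non-leading terms $j < M_k$, and by niceness of $\mathcal{D}$ every $C_l(g_i)$ lies in $\mathbb{Z}_p$ and every falling-factorial ratio is a $p$-integer. The full sum has valuation $\geq v_s$ by definition of $s$, while the leading term has valuation $a_k + b_k$; hence if $a_k + b_k < v_s$ some non-leading term must have valuation $\leq a_k + b_k$, producing an index $j < M_k$ with $v(C_j(G)) \leq a_k + b_k$, which I take as $M_{k+1}$. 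If at some earlier step $a_k + b_k \geq v_s$, I terminate the construction there and argue analogously.

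With the sequence built, the slope $\leq -1$ hypothesis on $[0, M]$ gives $a_n - a_0 \geq M - M_n$, while telescoping the step inequalities gives $a_n - a_0 \leq \sum_{k=0}^{n-1} b_k$. I then apply Lemma~\ref{also_trivial} with $i = M_n$ to conclude $M \leq \kappa_p M_n \leq \kappa_p(N + s)$, which is the desired bound.

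The main technical obstacle is verifying the hypothesis of Lemma~\ref{also_trivial} at this last step, namely $a_n - a_0 \leq v(M!/M_n!)$, starting from only $a_n - a_0 \leq \sum_k v(M_k!/(M_k - N)!)$. In the regime where every decrement $M_k - M_{k+1}$ is $\geq N$, the intervals $\{M_k - N + 1, \ldots, M_k\}$ are pairwise disjoint and nest inside $\{M_n + 1, \ldots, M\}$, so the comparison is immediate. In the opposite regime one must choose the witness $j$ more carefully, exploiting integrality of the $C_l(g_i)$ and of the falling factorials to force $j \leq M_k - N$ when possible, or absorb the overlapping contributions directly. The assumption $p > 2$ is essential here: as flagged in the paragraph before the lemma, the counterexample $\mathcal{D} = d/dx - 1$, $G = \exp(x) + 1$ shows that no such comparison can hold without it, and the failure would appear in the present argument as uncontrolled cancellation among the factorial ratios in exactly this overlapping regime.
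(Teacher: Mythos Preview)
Your strategy matches the paper's, but the step you flag as ``the main technical obstacle'' is a genuine gap, not merely a technicality you can wave through. When you pick the witness $j<M_k$ from the expansion of $C_{M_k-N}(\mathcal{D}(G))$, you record only
\[
v(C_j(G))\le a_k+b_k,
\]
discarding the nonnegative contribution $v\bigl(j!/(j-i)!\bigr)$ from the falling factorial attached to the non-leading term. With that information thrown away, all you can telescope to is $a_n-a_0\le\sum_{k<n}b_k=\sum_{k<n}v\bigl(M_k!/(M_k-N)!\bigr)$, and in the overlapping regime $M_{k+1}>M_k-N$ this sum genuinely exceeds $v(M!/M_n!)$ in general. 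Your suggestion to ``force $j\le M_k-N$ when possible'' cannot work: the cancelling term may have any index in $[0,M_k)$, and nothing in the hypotheses prevents it from landing in $(M_k-N,M_k)$.

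The fix, which is exactly what the paper does, is to keep the falling-factorial weight on the witness. Since $j-i=M_k-N-l\le M_k-N$, one has $v\bigl(j!/(j-i)!\bigr)\ge v\bigl(j!/(M_k-N)!\bigr)$ whenever $j\ge M_k-N$, and the latter is $0$ otherwise. Writing $q(j,M_k)$ for this lower bound, the correct step inequality is
\[
v(C_{M_{k+1}}(G))+q(M_{k+1},M_k)\;\le\; v(C_{M_k}(G))+q(M_k,M_k).
\]
Now the telescoping is clean: $q(M_k,M_k)-q(M_{k+1},M_k)$ equals $v(M_k!/M_{k+1}!)$ when $M_{k+1}\ge M_k-N$ and is $\le v(M_k!/M_{k+1}!)$ when $M_{k+1}<M_k-N$, so summing gives $a_n-a_0\le v(M!/M_n!)$ directly, with no case split. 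This is precisely the role of the function $q(i,j)$ and the sets $T(k)$ in the paper's proof; once you retain that extra term, your argument and the paper's coincide.
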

\begin{proof}
For integers $i\leq j$, let 
\[
q(i,j):=\left\{
\begin{array}{cc}
v ( i!/(j-N)!), &  \mathrm{if} \, i\geq j-N, \\
0, & \, \mathrm{otherwise}. \end{array} \right.
\]
 For $k\geq 0$, let 
$$
T(k)=\{0\leq i\leq k:v (C_i (G))+q(i,k) \leq v (C_k (G))+q(k,k) \}.
$$

Clearly $T(M)$ contains $M$. Note that the set of $i\in \mathbb{Z}_{\geq 0}$ for which $$v(C_{M-N}(\mathcal{D}(C_{i}(G)x^i )))\leq v(C_{M-N}(\mathcal{D}(C_{N}(G)x^N )))$$ is a subset of $T(M)$. Hence if $T(M)=\{M\}$, then $$v (C_{N-M}(\mathcal{D}(G)))=v (C_M (G))+q(M,M),$$ and either $M\leq N$ or 
$N-M=\min S(\mathcal{D}(G))$. In both cases the lemma follows.

Suppose now that $T(M)$ has cardinality larger than 1. We define a decreasing sequence $M_0 ,\ldots ,M_n $ of positive integers as follows. Let $M_0 :=M$, and define $M_1 =\min T(M_0 )$. If $T(M_1 )=\{ M_1 \}$, this is the end of the sequence, otherwise we define $M_2 $ as the minimum, and so on. Let $M_n $ be the last term in the sequence.

We claim $M_n \geq M-\frac{p-1}{p-2}\log _p (M_n)$.
To see this, note that for each $i$, we have 
$$
v(C_{M_{i+1}}(G))+q(M_{i+1},M_i) \leq v(C_{M_i }(G))+q(M_i ,M_i ).
$$
Hence 
$$
v(C_{M_n}(G))-v(C_{M_0}(G)) \leq \sum _i (q(M_i ,M_i )-q(M_i ,M_{i+1})) \leq v(M_0 !/M_n !).
$$
Since they lie in the slope $\leq -1$ part of the Newton polygon, this implies that $M_n$ and $M=M_0$ satisfy the inequality
$M-M_n \leq v(M! /M_n !)$, which by Lemma \ref{trivial} is less than or equal to  $\log _p (M_n)+(M-M_n )/(p-1)$. Thus we deduce 
$$
M_n \geq M-\frac{p-1}{p-2}\log _p (M_n).
$$

Hence, by Lemma \ref{also_trivial}, we have 
$$
M-\frac{p-1}{p-2}\log _p (M_n) \leq N+N_b (\mathcal{D}(G)).
$$
On the other hand, since $M\geq 2$ we have the elementary estimate 
$$\log _p (M_n) < M_n/\log (p).$$\end{proof}

\subsection{Example: integral points on elliptic curves}

Before describing a general method for constructing suitable nice differential operators,  we illustrate how Proposition \ref{algebraic_implies_bound} can be used to prove effective versions of known finiteness results in the quadratic Chabauty method by considering the simplest possible case: that of integral points on a rank 1 elliptic curve. By work of Kim \cite{Kim:rank1, BKK11}, we know that
integral points on rank 1 elliptic curves are contained in the zeroes of 
\[
G(z)=\int ^z _t \omega _0 \omega _1 +a \int ^z _t \omega _0 \omega _0 +b_i 
\]
for some constants $a,b_i \in \Q _p $, where the number of $b_i$ is determined by the Tamagawa numbers at bad primes. In this case, finding a differential operator is quite simple: if we take $\mathcal{D}= \left( \frac{d}{\omega _0 } \right) ^2 $, then
\[
\mathcal{D}(G)=x+a,
\]
and we deduce that $\# X(\Z _p )_2 < 2\kappa _p \left( \prod _v m_v \right) \# (E-O)(\F _p )$. 

\section{Differential operators for rational points: general case}\label{sec:diffops}
To use Proposition \ref{algebraic_implies_bound} to bound $X(\Q _p )_2 $, it remains to give a construction of a nice differential operator $\mathcal{D}$ such that $\mathcal{D}(G)$ is an algebraic function whose divisor can be controlled when $G$ is the iterated integral function from Proposition \ref{QC2input}.
The construction of the operator $\mathcal{D}$ is elementary.  First we make some preliminary notes about calculating the divisor of $\mathcal{D}(F)$ when $F$ and $\mathcal{D}$ are algebraic.
\begin{Lemma}\label{trivial_divisor}
Let $D=\sum n_i P_i $ be an effective divisor and let $F(x)$ be a function in $H^0 (X,\mathcal{O}(D))$. Suppose $dx$ is an algebraic differential with divisor $W-W'$, with $W,W'$ effective, and $W=\sum m_i Q_i $. Define $D_0 :=\sum P_i $ and $W_0 :=\sum Q_i $. 
Then, for all $j>0$,
\[
\frac{d^j F}{dx^j } \in H^0 (X,\mathcal{O}(jW+(j-1)W_0+D+jD_0 )).
\]
In particular $\frac{d^j F}{dx^j }\in H^0 (X,\mathcal{O}((2j-1)W+(j+1)D)$.
\end{Lemma}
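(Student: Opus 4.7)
The plan is to prove the lemma by induction on $j$, with the base case $j=1$ reducing to a local computation and the inductive step being a purely formal divisor-theoretic manipulation.

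For the base case $j=1$, I would work locally at each point $P\in X$ with a uniformizing parameter $t$. The key observation is that if $F\in H^0(X,\mathcal{O}(D))$, then at any $P_i \in \mathrm{supp}(D)$ of multiplicity $n_i$ we have $\mathrm{ord}_{P_i}(F) \geq -n_i$, so writing $F = a_{-n_i} t^{-n_i}+\cdots$ yields $\mathrm{ord}_{P_i}(dF) \geq -n_i - 1$ (in characteristic $0$), while at points outside $\mathrm{supp}(D)$, $F$ is regular and $\mathrm{ord}_P(dF) \geq 0$. In divisor-theoretic language this says
\[
\mathrm{div}(dF) + D + D_0 \geq 0,
\]
i.e.\ $dF \in H^0(X, \Omega(D + D_0))$. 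Dividing by $dx$, whose divisor is $W - W'$, gives
\[
\mathrm{div}(dF/dx) + W + D + D_0 = \mathrm{div}(dF) + D + D_0 + W' \geq W' \geq 0,
\]
which establishes $\frac{dF}{dx} \in H^0(X, \mathcal{O}(W + D + D_0))$, matching the formula for $j=1$.

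For the inductive step, assume $F^{(j)} := \frac{d^j F}{dx^j} \in H^0(X, \mathcal{O}(D^{(j)}))$ where $D^{(j)} := jW + (j-1)W_0 + D + jD_0$. Applying the $j=1$ result to $F^{(j)}$ in place of $F$ gives
\[
\frac{dF^{(j)}}{dx} \in H^0\!\left(X, \mathcal{O}(W + D^{(j)} + D_0^{(j)})\right),
\]
where $D_0^{(j)}$ is the reduced divisor with the same support as $D^{(j)}$. Since $\mathrm{supp}(D^{(j)}) \subseteq \mathrm{supp}(W) \cup \mathrm{supp}(D)$, we have the pointwise inequality $D_0^{(j)} \leq W_0 + D_0$, so
\[
W + D^{(j)} + D_0^{(j)} \leq (j+1)W + jW_0 + D + (j+1)D_0,
\]
completing the induction. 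The "in particular" clause follows immediately from the fact that $W_0 \leq W$ and $D_0 \leq D$, which gives $jW + (j-1)W_0 + D + jD_0 \leq (2j-1)W + (j+1)D$.

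The main obstacle, such as it is, is simply keeping the divisor bookkeeping straight—the argument is essentially formal once the local computation of $\mathrm{ord}_P(dF)$ is set up correctly, and there is no serious analytic content beyond the characteristic-zero statement that differentiation of a rational function increases the pole order by at most one at each point.
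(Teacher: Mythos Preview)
Your proof is correct and follows essentially the same approach as the paper: a base case $j=1$ established by a local computation of pole orders, followed by induction. The paper's proof is extremely terse (it just states that $dF$ has a pole of order $n_i+1$ at $P_i$ and that the general case follows by induction), and you have filled in exactly the details one would need, including the observation that the reduced divisor of $D^{(j)}$ is bounded by $W_0+D_0$, which is what makes the induction close.
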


\begin{proof}When $j=1$, this follows from the fact that
the differential $dF$ has poles only in the support of $]b[$ and has a pole of order $n_i+1$ at $P_i $.  The differential $dx$ only has zeroes at $W$, each of order 1. The general case follows by induction.
\end{proof}

We now restrict to our specific case of interest. Fix a point $\overline{z}$ in $X(\F _p )$. Let $D$ be an effective divisor on $X$ whose support is disjoint from $]\overline{z}[$. Let $\omega _0 ,\ldots ,\omega _{2g-1}\in H^0 (X,\Omega ^1  (D) )$ be a set of differentials of the second kind forming a basis of $H^1 _{\dR}(X)$. 
Let $x\in \mathbb{Q}_p [\! [t]\! ]$ be a formal parameter at some point $z_0 \in ]\overline{z}[$, such that $dx$ is algebraic with divisor $D_1 -D_0 $, (where $D_1 $ and $D_0 $ are effective). Let
$f_i := \omega _i /dx \in H^0 (X,\mathcal{O}(D+D_1 ) ).$ Finally, let $\eta $ be a differential in $H^0 (X,\Omega ^1 (D))$, and let
\[
G(z):=\sum a_{ij}\int ^z _b \omega _i \omega _j +\sum a_i \int ^z _b  \omega _i +\int ^z _b \eta +h(z)
\]
be the Coleman function from Proposition \ref{QC2input}.

The first step in constructing a differential operator satisfying the hypotheses of Proposition \ref{algebraic_implies_bound} is to reduce to constructing a nice differential operator which kills all the $f_i $.
\begin{Lemma}\label{lemma:annihilating}
Suppose $\mathcal{D}_1 =\sum _{i=0}^N g_i (\frac{d}{dx})^i $ is a nice differential operator of degree $N$, with coefficients in $H^0 (X,\mathcal{O}(E))$, for an effective divisor $E$ such that 
$$
\mathcal{D}_1 (f_i )=0
$$
for all i. Then $\mathcal{D} :=\mathcal{D}_1 \frac{d}{dx}$ is a nice differential operator with \[
\mathcal{D}(G)\in 
\begin{cases}& H^0 (X,\mathcal{O}(E+3(N-1)D_1 +(N+3)D)), N \geq 4 \\
& H^0(X, \mathcal{O}(E +(2N+1)D_1 + (N+3)D)), N = 2, 3.
\end{cases}
\]
\end{Lemma}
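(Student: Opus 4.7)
The first step is to verify $\mathcal{D} = \mathcal{D}_1 \circ d/dx$ is itself a nice differential operator. Writing $\mathcal{D} = \sum_{i=0}^N g_i (d/dx)^{i+1}$ shows it has degree $N+1$, leading coefficient $g_N \in \mathbb{Z}_p[[x]]^\times$, and lower coefficients still in $\mathbb{Z}_p[[x]]$, so niceness is immediate.

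Next I would prove the algebraicity and pole bound for $\mathcal{D}(G)$. The idea is first to apply $d/dx$ to $G$ to peel off one level of each iterated integral,
\[
\frac{dG}{dx} = \sum_{i,j} a_{ij}\, f_j\, I_i + \sum_i a_i f_i + \eta/dx + dh/dx,
\]
where $I_i := \int_b^z \omega_i$. Only the bilinear first sum is not yet algebraic. Then apply $\mathcal{D}_1$ and use Leibniz together with $I_i^{(k)} = f_i^{(k-1)}$ for $k \geq 1$ to compute
\[
\mathcal{D}_1(f_j I_i) = (\mathcal{D}_1 f_j)\, I_i + \sum_{l=1}^N f_i^{(l-1)} \sum_{k=0}^{N-l} \binom{l+k}{l}\, g_{l+k}\, f_j^{(k)}.
\]
The hypothesis $\mathcal{D}_1 f_j = 0$ kills the first summand, so $\mathcal{D}_1(f_j I_i)$, and hence $\mathcal{D}(G)$, is a sum of algebraic sections of $X$.

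Finally, I would bound the polar divisor of $\mathcal{D}(G)$ summand-by-summand using Lemma \ref{trivial_divisor}. For the $h$-piece, applying the sharper form $F^{(j)} \in H^0(X, \mathcal{O}(jW + (j-1)W_0 + D + jD_0))$ to $h \in H^0(X, \mathcal{O}(2D))$ at $j = N+1$ gives $h^{(N+1)} \in H^0(X, \mathcal{O}((2N+1) D_1 + (N+3) D))$, so the contribution from $g_\bullet \cdot h^{(N+1)}$ lies in $H^0(X, \mathcal{O}(E + (2N+1) D_1 + (N+3) D))$. For the bilinear pieces, Lemma \ref{trivial_divisor} gives $f_\bullet^{(j)} \in H^0(X, \mathcal{O}(3jD_1 + (j+1)D))$ for $j \geq 1$; combining the poles of $g_{l+k}$, $f_i^{(l-1)}$, and $f_j^{(k)}$ across the indices with $l + k \leq N$, the worst case is of the form $E + 3(N-1)D_1 + (N+1)D$. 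The $\eta/dx$ and $\sum a_i f_i$ contributions are strictly smaller and already absorbed into the same bound. The case split at $N = 4$ reflects that $3(N-1) \geq 2N+1$ exactly when $N \geq 4$: for $N \geq 4$ the iterated-integral piece dominates the $D_1$ coefficient, while for $N \in \{2, 3\}$ the $h$-piece does. The main obstacle will be the careful termwise bookkeeping of polar divisors through the Leibniz expansion; in particular, one must use the finer rather than the coarser form of Lemma \ref{trivial_divisor} on the $h$-piece to obtain the $(N+3)D$ coefficient (rather than the cruder $(2N+4)D$).
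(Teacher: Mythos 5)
Your proposal is correct and follows essentially the same route as the paper's proof: differentiate once so that everything except the bilinear terms becomes algebraic, expand $\mathcal{D}_1\bigl(f_j I_i\bigr)$ by Leibniz so that the hypothesis $\mathcal{D}_1(f_\bullet)=0$ kills the remaining non-algebraic summand, and then bound poles termwise via Lemma \ref{trivial_divisor}, using its finer form on the $h$-piece to obtain the $(N+3)D$ coefficient. Your divisor bookkeeping, including the explanation that the case split comes from $3(N-1)\geq 2N+1$ exactly when $N\geq 4$, reproduces the paper's own computation.
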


\begin{proof}
The operator $\mathcal{D}$ is nice because its leading coefficient is the same as that of $\mathcal{D}_1 $. We deal with the $\int \omega _i , \int \eta ,h$ and $\int \omega _i \omega _j $ terms of $G$ separately. First, we have 
\[
\mathcal{D}\left(\int \omega _i \right)=\mathcal{D}_1 (f_i )=0.
\]
For $\int \eta $, $\frac{d}{dx}(\int \eta )=\eta /dx \in H^0 (X,\mathcal{O}(D+D_1 ))$. Thus by Lemma \ref{trivial_divisor}, $\mathcal{D}(\int \eta )\in H^0 (X,\mathcal{O}(E+(2N+1)D_1 +(N+1)D))$.
For $h$, by Lemma \ref{trivial_divisor} we have, for all $k>0$,  
\[
\frac{d^k h}{dx^k }\in H^0 (X,\mathcal{O}((2k-1)D_1 +(k+2)D),
\]
hence  $\mathcal{D}(h)\in H^0 (X,\mathcal{O}((2N+1)D_1 +(N+3)D+E))$.  

Finally,
\begin{align*}
\mathcal{D}\left(\int \omega _i \omega _j \right) & =\mathcal{D}_1 \left(f_i \int \omega _j \right) \\
& = \sum _{k\leq N}g_k \left(\frac{d}{dx}\right)^k \left(f_i \int \omega _j \right) \\
& = \sum _{k\leq N}g_k \sum _{0\leq m\leq k}\binom{k}{m}\left(\frac{d}{dx}\right)^m \left(f_i \right)\left(\frac{d}{dx}\right)^{k-m}\left( \int \omega _j \right) \\
& = \mathcal{D}_1 (f_i )\int \omega _j +\sum _{k\leq N} g_k \sum _{0\leq m<k}\binom{k}{m}\left(\frac{d}{dx}\right)^m (f_i )\left(\frac{d}{dx}\right)^{k-m-1} (f_j ) \\
& = \sum _{k\leq N} g_k \sum _{0\leq m<k}\binom{k}{m}\left(\frac{d}{dx}\right)^m (f_i )\left(\frac{d}{dx}\right)^{k-m-1} (f_j ) \\
\end{align*}
since $\mathcal{D}(f_1 )=0$. By Lemma \ref{trivial_divisor}, for all $k\leq N$, and all $m<k$,
\[
\left(\frac{d}{dx}\right)^m (f_i )\left(\frac{d}{dx}\right)^{k-m-1} \left(f_j \right) \in H^0 (X,\mathcal{O}((2k-4)D_1 +(k+1)D_2 ) ),
\]
where $D_2 :=D_1 +D$. Hence $\mathcal{D}(\int \omega _i \omega _j )\in H^0 (X,\mathcal{O}(3(N-1)D_1 +(N+1)D+E)).$
\end{proof}

\subsection{Finding $\mathcal{D}_1 $: the general case}\label{subsec:construct_general}
By the previous lemma, to get a bound on the number of zeroes of $G$, we need to construct a nice differential operator (with algebraic coefficients we can control) which annihilates all the $f_i :=\omega _i /dx$. In general, given $m$ functions $F_1 ,\ldots ,F_m $, it is an elementary exercise to construct a nontrivial differential operator of order at most $m$ which annihilates all the $F_i $. Hence the nontrivial question is how to find a \textit{nice} differential operator. 

First we introduce some notation. Let $F_1 ,\ldots ,F_{2g}$ be elements of a formal power series algebra $K[\! [x]\! ]$. Let $S$ be  subset of $\mathbb{Z}_{\geq 0}$ of size $2g+1$.
Write $S=\{ n_1 ,\ldots n_{2g+1} \}$ with $n_i <n_{i+1}$. Let $A(S,F_1 ,\ldots ,F_{2g})$ denote the $2g \times (2g+1)$ matrix with entries in $K[\! [x]\! ]$ whose $(i,j)${th} entry is $\frac{1}{n_j !}\frac{d^{n_j} }{dx^{n_j} }(F_i )$. Let $A^{(j)}(S,F_1 ,\ldots ,F_{2g})$ denote the $2g \times 2g$ matrix obtained by deleting the $j$th column. Let $\mathcal{D}=\mathcal{D}_{S,F_1 ,\ldots ,F_{2g}} \in K[\! [x]\! ][\frac{d}{dx}]$ denote the differential operator
\[
\mathcal{D}_{S,F_1 ,\ldots ,F_{2g}}:=\sum _{i=1}^{2g+1}(-1)^{i+1} \frac{n_{2g+1}!}{n_i !} \det (A^{(i)})\frac{d^{n_i }}{dx^{n_i }}.
\]
We first note that $\mathcal{D}$ is always a differential operator which annihilates the $F_i$, then show that the set $S$ can be chosen so that $\mathcal{D}$ is nice.
\begin{Lemma}
For any choice of $S$, and all i,
\[
\mathcal{D}(F_i )=0.
\]
\end{Lemma}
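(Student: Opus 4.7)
The plan is to recognise $\mathcal{D}(F_k)$ as (a multiple of) the cofactor expansion of a $(2g+1)\times(2g+1)$ determinant which has a repeated row, and hence vanishes.

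More precisely, for each $k\in\{1,\ldots,2g\}$, I would first rewrite
\[
\mathcal{D}(F_k) \;=\; n_{2g+1}!\sum_{i=1}^{2g+1}(-1)^{i+1}\det\bigl(A^{(i)}\bigr)\cdot A_{k,i},
\]
using the definition of $\mathcal{D}$ and the fact that $A_{k,i}=\frac{1}{n_i!}\frac{d^{n_i}}{dx^{n_i}}(F_k)$ is by construction the $(k,i)$-entry of the matrix $A=A(S,F_1,\ldots,F_{2g})$. The factor $n_{2g+1}!/n_i!$ combines with the $1/n_i!$ hidden in $A_{k,i}$ in exactly the right way to make this identification.

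Next I would form the $(2g+1)\times(2g+1)$ matrix $\widetilde A$ obtained from $A$ by prepending the $k$-th row of $A$; that is, $\widetilde A$ has row $(A_{k,1},\ldots,A_{k,2g+1})$ on top followed by rows $1,\ldots,2g$ of $A$. Deleting the first row and the $i$-th column of $\widetilde A$ recovers exactly $A^{(i)}$, so cofactor expansion of $\det(\widetilde A)$ along the top row gives
\[
\det(\widetilde A)=\sum_{i=1}^{2g+1}(-1)^{i+1}A_{k,i}\det\bigl(A^{(i)}\bigr).
\]
Comparing with the formula above yields $\mathcal{D}(F_k)=n_{2g+1}!\det(\widetilde A)$.

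Finally, $\widetilde A$ has its top row equal to its $(k{+}1)$-st row (both equal the $k$-th row of $A$), so $\det(\widetilde A)=0$, giving $\mathcal{D}(F_k)=0$, as required. There is no genuine obstacle here; the only point to be careful about is bookkeeping the factorials so that the numerical coefficient in the definition of $\mathcal{D}$ matches the cofactor expansion, but this is immediate from the normalisation $A_{k,i}=\frac{1}{n_i!}F_k^{(n_i)}$.
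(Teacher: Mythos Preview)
Your proof is correct and is essentially the same as the paper's: the paper observes that for any $f$, $\mathcal{D}(f)=n_{2g+1}!\det(B)$ where $B$ is the $(2g+1)\times(2g+1)$ matrix obtained from $A$ by appending the row $\bigl(\tfrac{1}{n_i!}f^{(n_i)}\bigr)_i$, and then notes that for $f=F_k$ this matrix has a repeated row. The only cosmetic difference is that you prepend the extra row while the paper appends it; the cofactor expansion and the repeated-row argument are identical.
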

\begin{proof}
For any power series $f$,
\[
\mathcal{D}_{S,F_1 ,\ldots ,F_{2g}}(f) = n_{2g+1}! \det \left(
\begin{array}{ccc}
\frac{1}{n_1 !}\frac{d^{n_1 }}{dx^{n_1 }}(F_1 ) & \ldots & \frac{1}{n_{2g+1}! }\frac{d^{n_{2g+1} }}{dx^{n_{2g+1} }}(F_1 ) \\
\vdots & \ddots & \vdots \\
\frac{1}{n_1 !}\frac{d^{n_1 }}{dx^{n_1 }}(F_{2g} ) & \ldots & \frac{1}{n_{2g+1}! }\frac{d^{n_{2g+1} }}{dx^{n_{2g+1} }}(F_{2g} ) \\
\frac{1}{n_1 !}\frac{d^{n_1 }}{dx^{n_1 }}(f ) & \ldots & \frac{1}{n_{2g+1}! }\frac{d^{n_{2g+1} }}{dx^{n_{2g+1} }}(f ) \\
\end{array}
 \right).
\]
When $f=F_i$, the matrix does not have full rank.
\end{proof}

We now apply this construction in our case of interest. Let $D,D_0 ,D_1 ,\omega _i ,f_i$ be as defined earlier in this section.
\begin{Lemma}
There exists an $S\in \mathbb{Z}_{\geq 0}^{2g+1}$ with $\max S \leq \deg (D)+2g-1$  such that $\mathcal{D}_{S,f_0 ,\ldots ,f_{2g-1}}$  is nice. 
\end{Lemma}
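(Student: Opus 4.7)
The plan is to split niceness of $\mathcal{D}:=\mathcal{D}_{S,f_0,\ldots,f_{2g-1}}$ into two essentially independent problems (integrality of all coefficients, and the leading coefficient being a unit), then solve the latter by a Riemann--Roch computation on the special fibre.

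First I would observe that integrality of the lower-order coefficients is automatic. The coefficient of $\frac{d^{n_i}}{dx^{n_i}}$ is $(-1)^{i+1}\tfrac{n_{2g+1}!}{n_i!}\det(A^{(i)})$. Pulling the factor $\tfrac{1}{n_j!}$ out of column $j$ exhibits $\det(A^{(i)})$ as a polynomial in the Hasse--Schmidt derivatives $D^{(n)}:=\tfrac{1}{n!}\tfrac{d^n}{dx^n}$. Since $D^{(n)}$ acts by $x^m\mapsto\binom{m}{n}x^{m-n}$ it preserves $\Z_p[\![x]\!]$, and because $D$ is disjoint from $]\bar{z}[$, integral choices of the basis $\omega_k$ give $f_k=\omega_k/dx\in\Z_p[\![x]\!]$. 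Hence $\det(A^{(i)})\in\Z_p[\![x]\!]$ for every $i$, and the integer factor $\tfrac{n_{2g+1}!}{n_i!}$ keeps every non-leading coefficient in $\Z_p[\![x]\!]$ automatically. The only real task is to choose $S$ so that the leading coefficient $\det(A^{(2g+1)})\in\Z_p[\![x]\!]$ is a unit, i.e.\ so that its constant term $\det\bigl(a_{k,n_j}\bigr)_{0\leq k<2g,\,1\leq j\leq 2g}$ (where $a_{k,n}$ is the $x^n$-coefficient of $f_k$) is a $p$-adic unit.

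Next I would find such indices by a Riemann--Roch argument on $Y:=X_{\F_p}$. Good reduction at $p$ and an integral choice of basis let me assume $\bar\omega_0,\ldots,\bar\omega_{2g-1}$ are $2g$ linearly independent elements of $H^0(Y,\Omega^1(\bar D))$; let $V$ be their span. Set $N:=\deg D+2g-1$ and consider the truncation map
\[
\Phi\colon V\longrightarrow\F_p[x]/x^N,\qquad \bar\omega\mapsto(\bar\omega/d\bar x)\bmod x^N.
\]
Its kernel is $V\cap H^0(Y,\Omega^1(\bar D-N[\bar z_0]))$, and the line bundle $\Omega^1(\bar D-N[\bar z_0])$ has degree $(2g-2)+\deg D-N=-1<0$, hence admits no global sections. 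So $\Phi$ is injective, and the $2g\times N$ matrix of mod-$p$ Taylor coefficients $(\bar a_{k,j})$ has full row rank $2g$ over $\F_p$.

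From there I would extract indices $n_1<n_2<\cdots<n_{2g}\in\{0,\ldots,N-1\}=\{0,\ldots,\deg D+2g-2\}$ for which the $2g\times 2g$ submatrix $\bigl(\bar a_{k,n_j}\bigr)$ is invertible, and set $n_{2g+1}:=n_{2g}+1\leq\deg D+2g-1$. By construction $\det(A^{(2g+1)})|_{x=0}$ is a unit mod $p$, so $\det(A^{(2g+1)})\in\Z_p[\![x]\!]^\times$, completing the niceness verification with $\max S\leq\deg D+2g-1$. The main subtlety I anticipate is the interplay between two integral structures (the $\Z_p$-integrality of the $f_k$ used in paragraph two and the non-degeneracy of the reductions $\bar\omega_k$ used in paragraph three); both are handled by good reduction at $p$ together with an integral choice of the cohomology basis. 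Once everything is reformulated this way the bound $\deg D+2g-1$ is forced and sharp: it is precisely the smallest $N$ making $\Omega^1(\bar D-N[\bar z_0])$ have negative degree.
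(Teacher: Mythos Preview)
Your proposal is correct and follows essentially the same approach as the paper. Both arguments first dispose of integrality (the coefficients of $\mathcal{D}_S$ are automatically in $\mathbb{Z}_p[\![x]\!]$ because the entries of $A$ are Hasse--Schmidt derivatives of integral power series and $n_{2g+1}!/n_i!\in\mathbb{Z}$), then reduce niceness to the constant term of $\det(A^{(2g+1)})$ being a $p$-adic unit, and finally obtain this by observing that a nontrivial $\mathbb{F}_p$-linear combination of the $\bar\omega_i$ cannot vanish to order $\geq \deg D+2g-1$ at $\bar z$ since $\Omega^1(\bar D)$ has degree $2g-2+\deg D$; your phrasing via the injectivity of the truncation map $V\to\mathbb{F}_p[x]/x^N$ and the paper's phrasing via a hypothetical high-order zero are contrapositives of one another.
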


\begin{proof}
By construction, for any choice of $S$, the differential operator $\mathcal{D}_{S,f_0 ,\ldots ,f_{2g-1}}$ lies in $\mathbb{Z}_p [\! [x]\! ][\frac{d}{dx}]$, hence the only nontrivial condition is that the leading coefficient is in $\mathbb{Z}_p ^\times $. Note that 
\[
\frac{1}{n_i !}\frac{d^{n_i }}{dx^{n_i }}f|_{x=0}=C_{n_i }(f),
\]
hence requiring that the leading coefficient is in $\mathbb{Z}_p ^\times $ is equivalent to requiring that 
\[
\det (C_i (f_j ))_{1\leq i\leq 2g,0\leq j \leq 2g-1} \in \mathbb{Z}_p ^\times .
\]
Therefore, by definition, the least $N$ such that there exists a subset $S$ with $\max S\leq N+1$ for which $\mathcal{D}_{S,f_0 ,\ldots ,f_{2g-1}}$ is nice is exactly the least $N$ such that the $f_i$ remain $\mathbb{F}_p $-linearly independent after reduction mod $(p,x^N)$.  
Suppose that for all subsets $S$ of $\{ 0,\ldots ,N \}$ of size $2g+1$, $\mathcal{D}_{S,f_0,\ldots ,f_{2g-1}}$ is not nice. Then there is a nontrivial $\mathbb{F}_p$-linear combination of $\red _p \omega _0 ,\ldots ,\red _p \omega _{2g-1}$ which has a zero of order $N$. This gives an element of $H^0 (X_{\mathbb{F}_p },\Omega ^1 (D))$ with a zero of order $N$, which completes the proof.
\end{proof}
\begin{proof}[Proof of Theorem \ref{maintheorem} part (1)]
To complete the proof of Theorem \ref{maintheorem}, it remains to estimate the degree of the coefficients of a nice $\mathcal{D}:=\mathcal{D}_{S,f_0 ,\ldots ,f_{2g-1}}$. We follow the construction of basis differentials in \cite[\S 4.2]{darmon2015algorithms}. Let $P$ be a non-Weierstrass point of $X(\mathbb{Q}_p )$ whose mod $p$ reduction is different from $b$, and $h\in \mathbb{Q}_p (X)$ a non-constant function in $H^0 (X,\mathcal{O}((g+1)P))$. Let $\omega _0 ,\ldots ,\omega _{g-1}$ be a basis of $H^0 (X,\Omega ^1 )$. Define $\omega _{i+g}=h\omega _i$ for $0\leq i\leq g-1$. Then $(\omega _i )_{0\leq i \leq 2g-1}$ gives a basis of $H^1 _{\dr }(X)$. 

Let $\omega \in H^0 (X,\Omega^1 )$ be a differential which does not vanish mod $p$. Let $x$ denote the formal parameter on $]z[$ obtained by integrating $\omega $. Let $D_1 =(\omega )$. As above, define $f_i =\omega _i /dx \in H^0 (X,\mathcal{O}(D_1 +(g+1)P))$. 

	Since $D=(g+1)P$ above, we have an $S=\{ n_1 ,\ldots ,n_{2g+1} \}$ such that $\mathcal{D}_1 =\mathcal{D}_{S,f_0 ,\ldots ,f_{2g-1}}$ is nice and $\max S \leq 3g$. Hence the differential operator $\mathcal{D}:=\mathcal{D}_1 \frac{d}{\omega }$ has order at most $3g+1$. To apply Lemma \ref{lemma:annihilating}, it remains to estimate the degrees of the coefficients of $\mathcal{D}_{S,f_0 ,\ldots ,f_{2g-1}}$.  By Lemma \ref{trivial_divisor}, we have, for all $k\geq 0$, 
\[
\left( \frac{d^k }{dx^k } \right) f_i \in H^0 (X,\mathcal{O}(2kD_1 +(g+k+1)P).
\]
The $k$th coefficient of $\mathcal{D}$ is hence a sum of functions in 
$$
H^0 (X,\mathcal{O}(\sum _{1\leq i\leq 2g+1, i\neq k}(2n_i D_1 +(g+n_i +1)P))).
$$
Note that 
$$
\sum _{i \neq k}n_i \leq \sum _{i=g+1}^{3g}i = 4g^2 +g.$$
Hence the coefficients of $\mathcal{D}_1$ are in 
\[
H^0 (X,\mathcal{O}((8g^2 +2g)D_1 +3g(2g+1)P)).
\]

Applying Lemma \ref{lemma:annihilating} with $g \geq 2$, $N=3g$, 
$E = (8g^2 +2g)D_1 +3g(2g+1)P$,
and $D =(g+1)P$
we find that 
\begin{align*}
\mathcal{D}(G) \in  & H^0 (X,\mathcal{O}((8g^2 +2g)D_1 +3g(2g+1)(P)+3(3g-1)D_1 +3(g+1)^2 P)) \\
& = H^0 (X,\mathcal{O}((8g^2 +11g-3)D_1 +3(3g^2+3g+1)P)).
\end{align*}
Hence $\mathcal{D}(G)$ has degree at most 
\[(8g^2 +11g-3)(2g-2) +3(3g^2+3g+1) = 16g^3+15g^2-19g+9.\]
Applying Proposition \ref{algebraic_implies_bound}, we deduce that on each residue disk, $X(\Q _p )_{\alpha }$ has at most 
$$\kappa_p(16g^3+15g^2-16g+10)$$ points.
\end{proof}

\section{Differential operators for rational points: hyperelliptic case}
\subsection{The hyperelliptic case: non-Weierstrass disks}\label{subsec:nonW}
In this subsection, we prove the second part of Theorem \ref{maintheorem}. Let $X$ be a hyperelliptic curve of genus $g$ with good reduction at $p\neq 2g+1$ and Mordell-Weil rank $g$. The assumptions on $p$ imply that $X$ has a smooth model over $\mathbb{Z}_p$ of the form 
$$
y^2 =f(x)=x^{2g+2}+ a_{2g+1}x^{2g+1} + \cdots + a_0.
$$ 
Let $\omega _i $ be the differential $x^i dx/y$. We take as a basis of $H^1 _{\dr} (X)$ a subset of the $\Q $-span of the differentials $\omega _i :=x^i dx/y$, $0\leq i\leq 2g$. Hence, changing notation somewhat, we may write $G$ in the form
\[
G(z)=\sum _{0\leq i,j \leq 2g}a_{ij} \int ^z _b \omega _i \omega _j +\sum _{0\leq i\leq 2g}a_i \int ^z _b \omega _i +h(z).
\]
Let $\infty:=\infty^+ + \infty^-$ denote the degree two divisor of the two points $\infty^+, \infty^-$ above infinity.  Since all the $\omega _i $ are in $H^0 (X,\Omega ^1 ((g+1)\infty ))$, $h$ lies in $H^0 (X,\mathcal{O}(2(g+1)\infty ))$.

First let $\mathcal{D}_0 $ be the differential operator $\frac{d}{\omega _0 }$. Then define $G_1 := \mathcal{D}_0 G$. Hence 
$$
G_1 =\sum a_{ij}x^i \int \omega _j +\sum a_i x^i +h_1
$$
where $h_1 :=y\frac{d}{dx}h$.
Define $\mathcal{D}_1 =(\frac{d}{dx})^{2g+1}$, and $\mathcal{D}=\mathcal{D}_1 \mathcal{D}_0 $.
\begin{Lemma}
Let $W$ denote the degree $2g+2$ divisor of Weierstrass points.
Then the power series $\mathcal{D}(G)$ lies in $H^0 (X,\mathcal{O}((g+1)\infty +(4g+1)W))$.
\end{Lemma}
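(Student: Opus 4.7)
The plan is to apply $(d/dx)^{2g+1}$ to the expression for $G_1 = \mathcal{D}_0(G)$ already given in the text, show the result is rational, and then bound its pole orders at the Weierstrass locus and at $\infty^\pm$, the only possible pole loci. The polynomial $\sum a_i x^i$ in $G_1$ has degree $\leq 2g$ and so is annihilated by $(d/dx)^{2g+1}$. For the iterated-integral pieces, Leibniz yields
\[
(d/dx)^{2g+1}\!\left( x^i \int \omega_j \right) = \sum_{k=0}^{i} \binom{2g+1}{k}\frac{i!}{(i-k)!}\, x^{i-k}\, (d/dx)^{2g-k}(x^j/y),
\]
using $(d/dx)^k(x^i)=0$ for $k>i$ and $(d/dx)(\int \omega_j) = x^j/y$. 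Each summand is algebraic, as is $(d/dx)^{2g+1}(y h'(x))$, so $\mathcal{D}(G)$ is a specific rational function.

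To bound the order of pole at $\infty^\pm$, I use that $v_\infty(dx) = -2$, which forces $v_\infty(dF/dx) \geq v_\infty(F) + 1$ for every nonzero rational $F$. Starting from $v_\infty(x^j/y) = g+1-j$, applying $d/dx$ a total of $2g-k$ times yields valuation at least $3g+1-j-k$; multiplication by $x^{i-k}$ (of valuation $-(i-k)$ at $\infty$) gives total valuation at least $3g+1-i-j \geq 1-g$ for $i,j \leq 2g$, so pole order at most $g-1$ at each point at infinity. For the $y h'$ piece, $h \in H^0(X, \mathcal{O}(2(g+1)\infty))$ gives $v_\infty(h') \geq -(2g+1)$ and $v_\infty(y h') \geq -(3g+2)$; applying $(d/dx)^{2g+1}$ then raises the valuation by at least $2g+1$, so the pole order is at most $g+1$.

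To bound the order of pole at a Weierstrass point $w$, note that $v_w(dx)=1$, giving $v_w(dF/dx) \geq v_w(F)-2$ whenever $v_w(F) \neq 0$; and when $v_w(F) = 0$ we have the sharper bound $v_w(dF/dx) \geq -1$, since $v_w(dF) \geq 0$ when $F$ is regular at $w$. Starting from $v_w(x^j/y) = -1$, iterating yields $v_w((d/dx)^{2g-k}(x^j/y)) \geq -(4g-2k+1) \geq -(4g+1)$, a bound preserved under multiplication by $x^{i-k}$, which is regular at $w$. For the $yh'$ piece, $h$ is regular at $w$, so $v_w(h') \geq -1$ and $v_w(yh') \geq v_w(y) + v_w(h') \geq 0$; the first application of $d/dx$ to this regular function gives $v_w \geq -1$ by the sharper bound, and the remaining $2g$ applications give $v_w \geq -1-4g$, again a pole of order at most $4g+1$.

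The main subtlety is this Weierstrass bound: the naive recurrence $v_w((d/dx)F) \geq v_w(F)-2$ applied blindly would give $4g+2$, but the one-step refinement available when $v_w(F)=0$ is exactly what tightens the total to $4g+1$. Assembling all contributions yields $\mathcal{D}(G) \in H^0(X, \mathcal{O}((g+1)\infty + (4g+1)W))$ as claimed.
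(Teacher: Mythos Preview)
Your proof is correct and follows essentially the same route as the paper. Both arguments split $\mathcal{D}(G)$ into the $h$-piece and the $\int\omega_i\omega_j$-pieces, apply the identical Leibniz expansion to the latter, and bound poles by tracking how $d/dx$ affects orders at $\infty^\pm$ and at Weierstrass points; your ``one-step refinement when $v_w(F)=0$'' is exactly the paper's case distinction between $m=0$ and $m>0$ in its rule for $dF/dx$ when $F\in H^0(X,\mathcal{O}(n\infty+mW))$. The only cosmetic difference is that you compute with local valuations at each point while the paper packages the same estimates as inclusions of global $H^0$'s.
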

\begin{proof}
We have $\mathcal{D}(\int \omega _i )=0$ for all $i$, so it will be enough to prove this for $h$ and for $\int \omega _i \omega _j $.  Note that we have $\div(dx) = W - 2\infty, \div(y) = W - (g+1)\infty,$ and so $\div(\omega_0) = (g-1)\infty$. For $h$, we use Lemma \ref{trivial_divisor} to deduce $$\mathcal{D}_0 (h)\in H^0 (X,\mathcal{O}((3g+2)\infty)).$$
Since $dx$ has divisor $W - 2\infty$,
if $F \in H^0 (X,\mathcal{O}(n\infty +mW))$, a direct computation gives
\[
\frac{dF}{dx}\in \left\{ \begin{array}{cc}
H^0 (X,\mathcal{O}((n-1)\infty +(m+2)W)), & m>0,\\
H^0 (X,\mathcal{O}((n-1)\infty +W)), & m=0.\\ \end{array} \right.
\]
Hence $\mathcal{D}(h) \in H^0 (X,\mathcal{O}((g+1)\infty +(4g+1)W)).$ 

For $\int \omega _i \omega _j $, we have 
\begin{align}\label{eqij}
\mathcal{D}\left(\int \omega _i \omega _j\right) & = \mathcal{D}_1 \left(x^i \int \omega _j \right) \nonumber \\
& = \sum _{k=0}^{i}\binom{2g+1}{k}\frac{i!}{(i-k)!}x^{i-k}\left(\frac{d}{dx}\right)^{2g-k+1}\int (\omega _j ) \nonumber \\
& = \sum _{k=0}^{i}\binom{2g+1}{k}\frac{i!}{(i-k)!}x^{i-k}\left(\frac{d}{dx}\right)^{2g-k}\left(\frac{x^j}{y}\right). 
\end{align}

Now we have $$\frac{x^j}{y} \in H^0 (X,\mathcal{O}((j-g-1)\infty +W),$$ so 
$$\left( \frac{d}{dx} \right) ^{(2g-k)}\left(\frac{x^j}{y}\right) \in H^0 (X,\mathcal{O}((j-3g+k-1)\infty +(4g-2k+1)W)),$$ which gives that  $$x^{i-k}\left( \frac{d}{dx} \right) ^{(2g-k)}\left(\frac{x^j}{y}\right) \in H^0 (X,\mathcal{O}((i+j-1-3g)\infty +(4g-2k+1)W)),$$ and thus each summand of \eqref{eqij} is an element of $H^0 (X,\mathcal{O}((g-1)\infty +(4g+1)W))$.
\end{proof}

Since the degree of $((g+1)\infty +(4g+1)W)$ is $2g+2+(4g+1)(2g+2)=8g^2 +12g+4$, applying Proposition \ref{algebraic_implies_bound}, and summing over all non-Weierstrass residue disks, we find that the number of points of $X(\Q _p )_{\alpha }$ which reduce to non-Weierstrass points away from infinity is at most $$\kappa_p((8g^2 +12g+4)+(2g+2) \# (X-W-\infty)(\mathbb{F}_p )).$$
For the two points at infinity, we may apply the same analysis with the equation at infinity 
\[
y^2 =a_0 x^{2g+2}+a_1 x^{2g+1}+\ldots +1.
\]
We deduce that the number of points of $X(\Q _p )_{\alpha }$ which reduce to non-Weierstrass points is at most $$\kappa_p((16g^2 +24g+8)+(2g+2) \# (X-W)(\mathbb{F}_p )).$$

\subsection{The hyperelliptic case: Weierstrass points}\label{subsec:W}

The computation at Weierstrass disks is carried out in a manner similar to the method developed in Section \ref{sec:diffops}. The essential difference is that, instead of trying to find a new nice differential operator $\mathcal{D}_1 $ annihilating the $2g$ functions $\{ f_0 ,\ldots ,f_{2g-1} \}$ for each residue disk $]b[$, we find a differential operator $\mathcal{D}_1 $ which annihilates the $2g+1$ functions $\omega _i /\omega _0 ,$  ($0\leq i\leq 2g$) at all Weierstrass disks. The price paid for this is that the degree is slightly larger.
 Let $B\in M_{2g+1} (K(X))$ denote the matrix $$B=\left(\frac{1}{(2i)!
}\left( \frac{d}{\omega _0 } \right) ^{2i} x^j \right)_{0\leq i,j\leq 2g}.$$

\begin{Lemma}
For all Weierstrass points $z=(\alpha ,0) \in X(\mathbb{F}_p )$, $\det (B)\in \mathcal{O}_{\mathcal{X},z}^\times $.
\end{Lemma}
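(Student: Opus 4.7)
The strategy is to compute $\det(B)$ explicitly by passing to a local coordinate at $z$ in which $d/\omega_0$ becomes an honest derivative. From $2y\,dy = f'(x)\,dx$ one has $\omega_0 = (2/f'(x))\,dy$, so $s := \int \omega_0$ is a local parameter at $z$ (the integrand being nonvanishing there, since good reduction at $p \geq 3$ forces $f$ to be squarefree mod $p$ and hence $f'(\alpha)$ to be a unit at every Weierstrass point). With this choice, $d/\omega_0 = d/ds$.

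The key structural observation is that $s$ is \emph{odd} in $y$ on the residue disk: $dy/ds = f'(x)/2$ is a function of $x$, hence of $y^2$, so the $y$-expansion of $s$ has only odd powers. Consequently $s^2 \in K[\![y^2]\!] = K[\![x-\alpha]\!]$, and a direct expansion of the leading terms (via $y = (f'(\alpha)/2)\,s + O(s^3)$ together with $x - \alpha = y^2/f'(\alpha) + O(y^4)$) yields
\[
x - \alpha \;=\; \frac{f'(\alpha)}{4}\,s^2 \;+\; O(s^4).
\]
In particular, each $(x-\alpha)^k$ lies in $K[\![s^2]\!]$ with leading term $(f'(\alpha)/4)^k\,s^{2k}$.

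The entry $B_{ij}|_z$ is the coefficient of $s^{2i}$ in the $s$-expansion of $x^j$. I would factor this through the intermediate basis $\{(x-\alpha)^k\}_{0 \leq k \leq 2g}$: write $B|_z = M\,W$, where $W_{kj} = \binom{j}{k}\alpha^{j-k}$ for $k \leq j$ (else zero), implementing $x^j = \sum_k W_{kj}(x-\alpha)^k$, and $M_{ik}$ is the $s^{2i}$-coefficient of $(x-\alpha)^k$. Then $W$ is upper-triangular with unit diagonal, so $\det W = 1$, while $M$ is lower-triangular with diagonal $(f'(\alpha)/4)^k$ for $k = 0, \ldots, 2g$ by the leading-term calculation above.

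Combining gives $\det(B)|_z = \prod_{k=0}^{2g}(f'(\alpha)/4)^k = (f'(\alpha)/4)^{g(2g+1)}$. Since $f'(\alpha)$ is a unit at $z$ by good reduction and $4 \in \mathbb{Z}_p^\times$ (using $p \neq 2$), this value is a unit in the residue field, so $\det(B) \in \mathcal{O}_{\mathcal{X},z}^\times$. The only genuine obstacle is the oddness-in-$y$ of $s$, which is what forces the triangular structure of $M$; without it one would only see that $M$ has full rank generically, not a unit value at $z$. Once that input is in hand, the remainder is a short linear-algebra calculation.
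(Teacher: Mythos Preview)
Your proof is correct. Both your argument and the paper's hinge on the same parity observation---that the hyperelliptic involution flips the sign of $d/\omega_0$, so odd iterates of $d/\omega_0$ applied to functions of $x$ vanish at Weierstrass points---but the endgames differ. The paper argues abstractly: the full $(4g{+}1)\times(2g{+}1)$ matrix of Taylor coefficients $\bigl((d/\omega_0)^i x^j|_z\bigr)_{0\le i\le 4g,\,0\le j\le 2g}$ has rank $2g{+}1$ because a nonzero polynomial in $x$ of degree $\le 2g$ vanishes to order at most $4g$ at $z$; since the odd-$i$ rows vanish by parity, the even-$i$ rows (which form $B|_z$) must already have full rank. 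You instead pass to the local parameter $s=\int\omega_0$, exploit the oddness of $s$ in $y$ to see that $x-\alpha$ is a power series in $s^2$ with unit leading coefficient $f'(\alpha)/4$, and read off a lower-triangular--times--unipotent factorization of $B|_z$ giving the explicit value $\det(B)|_z=(f'(\alpha)/4)^{g(2g+1)}$. Your approach yields strictly more information (an exact formula rather than mere nonvanishing) at the cost of a short leading-term computation; the paper's rank argument is slightly slicker but less explicit.
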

\begin{proof}
Clearly $\det (B)$ is defined at $z$, so it is sufficient to prove it is nonzero at $z$. First note that, since $x-\alpha $ has a zero of order $2$ at $z$, a linear combination of $1,x,\ldots ,x^{2g}$ can have a zero of order at most $4g$. Hence the $(2g+1) \times (4g+1)$ matrix $\left(\frac{d^i }{\omega _0 ^i }x^j |_{z} \right)_{0\leq i\leq 4g,0\leq j\leq 2g}$ has rank at least $2g+1$. On the other hand, for all odd $j$, $\frac{d^j }{\omega _0 ^j }x^i$ is an odd function with respect to the hyperelliptic involution, and hence vanishes at $z$. Hence $B|_{z}$ is invertible in $M_{2g+1}(\mathbb{F}_p )$.
\end{proof}

\begin{proof}[Proof of Theorem \ref{maintheorem} part (2)] 
We deduce that  we can apply the construction of Section \ref{subsec:construct_general} with $A$ taken to be the $(2g+1)\times (2g+2)$ matrix \[
\left( \left( \frac{d}{\omega _0  }\right) ^i x^j \right) _{0\leq j\leq 2g,i=0,2,4,\ldots ,4g,4g+1}.
\] The function $\left( \frac{d}{\omega _0 } \right)^i x^j $ lies in $H^0 (X,\mathcal{O}((gi+j)\infty ) )$, hence the differential operator $\mathcal{D}_1 =\mathcal{D}_{S, \omega_0/\omega_0, \omega_1/\omega_0, \ldots, \omega_{2g}/\omega_{0}}$
has coefficients in $$H^0 \left(X,\mathcal{O}\left(\left( g\sum _{i=1}^{2g}2i \right)+g(4g+1)+\sum _{j=0}^{2g}j\right)\right)=H^0 (X,\mathcal{O}(4g^3 +8g^2+2g)\infty )).$$
Define $\mathcal{D}:=\mathcal{D}_1 \mathcal{D}_0 $, where $\mathcal{D}_0 :=(d/\omega _0 )$.
Applying Lemma \ref{lemma:annihilating} with  
$E=(4g^3 +8g^2+2g)\infty$,
$N=4g+1$, $D_1 =(g-1)\infty $, $D=(g+1)\infty $, we deduce
\[
\mathcal{D}(G)\in  H^0 (X,\mathcal{O}((4g^3 +24g^2 -2g+4)\infty )).
\]
The number of points of $X(\Q _p )_{\alpha }$ on all Weierstrass disks is hence, by Proposition \ref{algebraic_implies_bound}, at most 
$$\kappa _p ((4g+2)\# W(\mathbb{F}_p )+2(4g^3+24g^2-2g+4)).$$ Combining with the bounds from the non-Weierstrass residue disks in the previous subsection, we find that 
\[
\# X(\Q _p )_{\alpha } \leq \kappa _p ((2g+2)\# X(\mathbb{F}_p )+2g \# W(\mathbb{F}_p )+8g^3 +64g^2 +20g +16).
\]
\end{proof}

\section{Integral points for hyperelliptic curves}
The proof of the $g>1$ case of Theorem \ref{integraltheorem} follows a similar strategy to the previous section.
Let $X$ be a hyperelliptic curve of genus $g>1$ with equation
\[
y^2 =f(x)=x^{2g+1}+a_{2g}x^{2g} + \cdots  + a_0
\]
and suppose the rank of the Jacobian of $X$ is equal to $g$. Then the set $X(\mathbb{Z}_p )_2 $ is partitioned into a disjoint union of sets $X(\mathbb{Z}_p )_{\alpha }$. 
Each set $X(\mathbb{Z}_p )_{\alpha } $ is contained in the set of zeroes of a Coleman function of the form
\[
G(z)=\sum _{0\leq i,j <2g} a_{ij}\int ^z _b  \omega _i \omega _j +\sum _{0\leq i<2g} a_i \int ^z _b \omega _i +h(z),
\]
where $h\in H^0 (X,\mathcal{O}(4g \infty ) )$, with $\infty $ now denoting the divisor of degree 1 consisting of the unique point at infinity. Let $W$ denote the degree $2g+1$ divisor of Weierstrass points away from infinity and define $\mathcal{D}_0 :=\frac{d}{\omega _0 }$. For non-Weierstrass points, we take the differential operator to be 
\[
\mathcal{D}=\left( \frac{d}{dx} \right) ^{2g}\mathcal{D}_0 .
\]
Since $\omega _0$ has a zero of order $(2g-2)$ at $\infty$, we have
\[
\frac{dh}{\omega _0 }\in H^0 (X,\mathcal{O}((6g-1)\infty ) ).
\]
Similar to the case of an even degree model, since $dx$ has divisor $W-3\infty $, if $F$ is in $H^0 (X,\mathcal{O}(nW+m\infty ) )$ then 
\[
\frac{dF}{dx} \in \left\{ \begin{array}{cc}
H^0 (X,\mathcal{O}((n+2)W+(m-2)\infty ) ), & n>0 \\
H^0 (X,\mathcal{O}(W+(m-2)\infty ) ), & n=0. \\ \end{array} \right.
\]

Hence $\mathcal{D}(h)$ is in $H^0 (X,\mathcal{O}((2g-1)\infty +(4g-1)W))$.

For the remaining term, note that 
\begin{equation}\label{eqn:another_divisor}
\left( \frac{d}{dx} \right) ^k \left(\frac{x^j}{y}\right) \in H^0 (X,\mathcal{O}((2k+1)W+(2j-2k-2g-1)\infty ) ). \\
\end{equation}
Since 
\begin{align*}
\mathcal{D}\left(\int \omega _i \omega _j \right) &= \left( \frac{d}{dx} \right) ^{2g}\left(x^i \int \omega _j \right) \\
& = \sum _{0\leq k<2g} \binom{2g}{k} \frac{i!}{(i-k)!}x^{i-k}\left( \frac{d}{dx} \right) ^{2g-k-1}\left(\frac{x^j}{y}\right),
\end{align*}
equation \eqref{eqn:another_divisor} implies
\[
x^{i-k}\left( \frac{d}{dx} \right) ^{2g-k-1}\left(\frac{x^j}{y}\right) \in H^0 (X,\mathcal{O}((4g-2k-1)W+(2j+2i-6g+1)\infty ) ).
\]
Hence $\mathcal{D}(\int \omega _i \omega _j )$ lies in $H^0 (X,\mathcal{O}((4g-1)W+(2g-3)\infty ))$.  Arguing as in Section \ref{subsec:nonW}, we deduce that the number of integral points of $X$ which reduce to non-Weierstrass points mod $p$ is bounded by
\[
\kappa _p \left( \prod _{v\in T_0 }m_v \right)(8g^2+4g-4+(2g+1)\# (Y-W)(\mathbb{F}_p )).
\]
\subsection{Differential operators at Weierstrass points}
Let $B\in M_{2g}(K(X))$ be the matrix $\left( \frac{1}{(2i)!}\left( \frac{d}{\omega _0 }\right) ^{2i} x^j \right) _{0\leq i,j<2g}$. 
As in Section \ref{subsec:W}, we may show $\det (B)$ is a unit at all points in $W$, and hence construct a differential operator $\mathcal{D}_1 $ from the matrix 
\[
A= \left( \frac{1}{(i)!}\left( \frac{d}{\omega _0 }\right) ^i x^j \right) _{0\leq j<2g,i=0,2,\ldots ,4g-2,4g-1}.
\]
Since $\omega _0$ has a zero of order $(2g-2)$ at $\infty$, we find that $$\left( \frac{d}{\omega _0 } \right) ^i x^j  \in H^0 (X,\mathcal{O}((i(2g-1)+2j)\infty )).$$
We deduce that the coefficients of $\mathcal{D}_1$ lie in
$H^0 (X,\mathcal{O}(8g^3+4g^2-6g+1)\infty ).$
Define $\mathcal{D}:=\mathcal{D}_1 \mathcal{D}_0 $, where $\mathcal{D}_0 :=(d/\omega _0 )$.
Applying Lemma \ref{lemma:annihilating} with $E =(8g^3+4g^2-6g+1)\infty$, $D_1 =2(g-1)\infty $, $D=2g \infty $ and $N=4g-1$, we deduce that $\mathcal{D}(G)$ lies in 
$H^0 (X,\mathcal{O}((8g^3+36g^2-38g+13)\infty ))$ 
Hence by Proposition \ref{algebraic_implies_bound}, we find that the number of integral points reducing to Weierstrass points is bounded by 
$$
\left( \prod _v m_v \right) \kappa _p (8g^3+36g^2-38g+13+4g \# W(\F _p )).
$$
We deduce a bound for the total number of integral points of 
\[
\kappa _p \left( \prod _{v\in T_0 }m_v \right)(8g^3+44g^2-34g+9+(2g+1)\# Y(\mathbb{F}_p )+(2g-1)\# W(\mathbb{F}_p )).
\]

\bibliography{bibECK}

\bibliographystyle{alpha} 

\end{document}